\newcommand{\R}{\mathbb R}
\newcommand{\C}{\mathbb C}
\newcommand{\eps}{\epsilon}
\newcommand{\re}[1]{\mbox{Re} \ #1} 
\newcommand{\im}[1]{\mbox{Im} \ #1} 
\newcommand{\imem}[1]{\mbox{\emph{Im}} \ #1}
\newcommand{\defendproof}{\hfill $\Box$} 
\newtheorem{theorem}{Theorem}[section]
\newtheorem{lemma}[theorem]{Lemma} 
\newtheorem{proposition}[theorem]{Proposition}
\theoremstyle{definition}
\newtheorem{definition}[theorem]{Definition}
\newtheorem{remark}[theorem]{Remark}
\title[Blow-up dynamics NLS inverse-square potential]{Mass concentration and characterization of finite time blow-up solutions for the nonlinear Schr\"odinger equation with inverse-square potential} 
\author{Abdelwahab Bensouilah}
\address{Laboratoire Paul Painlev\'e (U.M.R. CNRS 8524), U.F.R. de Math\'ematiques, Universit\'e Lille 1, 59655 Villeneuve d'Ascq Cedex, France}
\email{ai.bensouilah@math.univ-lille1.fr}
\author{Van Duong Dinh}
\address{Institut de Mathematiques de Toulouse UMR5219, Universit\'e de Toulouse CNRS, 31062 Toulouse Cedex 9, France}
\email{dinhvan.duong@math.univ-toulouse.fr}
\begin{document}
	
	\begin{abstract}
		We consider the $L^2$-critical NLS with inverse-square potential
		\[
		i \partial_t u +\Delta u + c|x|^{-2} u = -|u|^{\frac{4}{d}} u, \quad u(0) = u_0, \quad (t,x) \in \R^+ \times \R^d,
		\]
		where $d\geq 3$ and $c\ne 0$ satisfies $c<\lambda(d) := \left(\frac{d-2}{2}\right)^2$. We extend the mass concentration of finite time blow-up solutions established by the first author in \cite{Bensouilah} to $c<\lambda(d)$. Using the profile decomposition, we give a short and simple proof of a limiting profile theorem that yields the same characterization of finite time blow-up solutions with minimal mass obtained by Csobo-Genoud in \cite{CsoboGenoud}. We also extend the characterization obtained by Csobo-Genoud to $c<\lambda(d)$.
	\end{abstract}
	
	\maketitle
	
	\section{Introduction}
	\setcounter{equation}{0}
	
	Consider the Cauchy problem for the focusing $L^2$-critical nonlinear Schr\"odinger equation with inverse-square potential
	\begin{align}
	\left\{
	\begin{array}{rcl}
	i\partial_t u + \Delta u + c|x|^{-2} u &=& - |u|^{\frac{4}{d}} u, \quad (t,x)\in \R^+ \times \R^d, \\
	u(0)&=& u_0,
	\end{array} 
	\right. \label{NLS inverse square}
	\end{align}
	where $d\geq 3$, $u: \R^+ \times \R^d \rightarrow \C$, $u_0:\R^d \rightarrow \C$ and $c\ne 0$ satisfies $c<\lambda(d):=\left(\frac{d-2}{2}\right)^2$. The Schr\"odinger equation with inverse-square potential appears in a variety of physical settings, such as in quantum field equations or black hole solutions of the Einstein's equations (see e.g. \cite{CamblongEpeleFanchiottiCanal} or \cite{KalfSchminckeWalterWust}). The study of nonlinear Schr\"odinger equation with inverse-square potential and power-type nonlinearity has attracted a lot of interests in the last several years (see e.g. \cite{KalfSchminckeWalterWust, BurqPlanchonStalkerTahvildar-Zadeh, OkazawaSuzukiYokota, ZhangZheng, TrachanasZographopoulos, KillipMiaoVisanZhangZheng-sobolev, KillipMiaoVisanZhangZheng-energy, KillipMurphyVisanZheng, LuMiaoMurphy, CsoboGenoud, Dinh-inverse, Bensouilah} and references therein). 
	
	Denote $P_c$ the self-adjoint extension of $-\Delta - c|x|^{-2}$. It is known (see e.g. \cite{KalfSchminckeWalterWust}) that in the range $\lambda(d) -1 <c <\lambda(d)$, the extension is not unique. In this case, we do make a choice among possible extensions, such as Friedrichs extension. The restriction $c<\lambda(d)$ comes from the sharp Hardy inequality
	\begin{align}
	\lambda(d) \int |x|^{-2} |f(x)|^2 dx \leq \int |\nabla f(x)|^2 dx, \quad \forall f \in H^1, \label{sharp hardy inequality}
	\end{align}
	which ensures that $P_c$ is a positive operator. We define the homogeneous Sobolev space $\dot{H}^1_c$ as a completion of $C^\infty_0(\R^d \backslash \{0\})$ under the norm
	\begin{align}
	\|f\|_{\dot{H}^1_c} := \|\sqrt{P_c} f\|_{L^2} = \left( \int |\nabla f(x)|^2 - c|x|^{-2} |f(x)|^2 dx \right)^{1/2}. \label{define dot H1c norm}
	\end{align}
	The sharp Hardy inequality implies that for $c<\lambda(d)$, $\|f\|_{\dot{H}^1_c} \sim \|f\|_{\dot{H}^1}$, and the homogeneous Sobolev space $\dot{H}^1_c$ is equivalent to the usual homogenous Sobolev space $\dot{H}^1$.  
	
	The local well-posedness for $(\ref{NLS inverse square})$ was established by Okazawa-Suzuki-Yokota \cite{OkazawaSuzukiYokota}. 
	\begin{theorem}[Local well-posedness \cite{OkazawaSuzukiYokota}] \label{theorem local well-posedness}
		Let $d\geq 3$ and $c\ne 0$ be such that $c<\lambda(d)$. Then for any $u_0 \in H^1$, there exists $T \in (0, +\infty]$ and a maximal solution $u \in C([0,T), H^1)$ of $(\ref{NLS inverse square})$. The maximal time of existence satisfies either $T=+\infty$ or $T<+\infty$ and
		$
		\lim_{t\uparrow T} \|\nabla u(t)\|_{L^2} =\infty
		$.
		Moreover, the local solution enjoys the conservation of mass and energy
		\begin{align*}
		M(u(t)) &= \int |u(t,x)|^2 dx = M(u_0), \\
		E(u(t)) &= \frac{1}{2} \int |\nabla u(t,x)|^2 dx - \frac{c}{2} \int |x|^{-2} |u(t,x)|^2 dx - \frac{d}{2d+4} \int |u(t,x)|^{\frac{4}{d}+2} dx,
		\end{align*}
		for any $t\in [0,T)$.
	\end{theorem}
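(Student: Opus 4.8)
The plan is to establish local existence and uniqueness by a Kato--Cazenave fixed-point argument, solving in a space of the form $Y_T := C([0,T];H^1)\cap L^q([0,T];W^{1,r}_c)$ --- with $(q,r)$ a Schr\"odinger-admissible pair and $W^{1,r}_c$ the Sobolev space adapted to $P_c$ --- the equation $u=\Phi(u)$, where
\[
\Phi(u)(t) := e^{-itP_c}u_0 + i\int_0^t e^{-i(t-s)P_c}\big(|u(s)|^{\frac 4d}u(s)\big)\,ds
\]
is the Duhamel operator for $(\ref{NLS inverse square})$, and then to deduce the blow-up alternative and the conservation laws from the local theory by standard continuation and approximation arguments. (The original proof in \cite{OkazawaSuzukiYokota} proceeds instead by an abstract energy method; the Strichartz-based scheme sketched here is the variant that has since become standard for such equations, cf. \cite{Dinh-inverse, KillipMurphyVisanZheng}.) Before this scheme can run mechanically, two properties of the linear propagator $e^{-itP_c}$ must be available.

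First, since the potential $c|x|^{-2}$ is scaling-critical, one cannot reduce to the free Schr\"odinger equation by a perturbative argument, so it must be known separately that $e^{-itP_c}$ nonetheless obeys the usual Schr\"odinger Strichartz estimates for $d\ge 3$ and $c<\lambda(d)$; this is the content of the dispersive/Strichartz analysis of \cite{BurqPlanchonStalkerTahvildar-Zadeh} (see also \cite{KillipMiaoVisanZhangZheng-sobolev}). Second, one needs the equivalence of Sobolev norms: the sharp Hardy inequality $(\ref{sharp hardy inequality})$ already gives $\|\sqrt{P_c}f\|_{L^2}\sim\|\nabla f\|_{L^2}$, so that $H^1_c:=\{f\in L^2:\ \sqrt{P_c}f\in L^2\}=H^1$ with equivalent norms, and in addition the $L^p$-version $\|\sqrt{P_c}g\|_{L^p}\sim\|\nabla g\|_{L^p}$ --- valid for $p$ in a range adapted to $c$ that is adequate for the exponents appearing below --- which is the theory of $P_c$-adapted Sobolev spaces of \cite{KillipMiaoVisanZhangZheng-sobolev}. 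The point of this last equivalence is that it lets $\sqrt{P_c}$ be commuted past the nonlinearity, whereas $\nabla$ itself does \emph{not} commute with $e^{-itP_c}$.

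Granting these two inputs, the contraction proceeds in the usual way: apply the inhomogeneous Strichartz estimate to $\Phi(u)$ and to $\sqrt{P_c}\Phi(u)=e^{-itP_c}\sqrt{P_c}u_0+i\int_0^t e^{-i(t-s)P_c}\sqrt{P_c}\big(|u|^{\frac 4d}u\big)\,ds$; estimate the nonlinear term, in the dual space of a second admissible pair, by commuting $\sqrt{P_c}$ through $|u|^{\frac 4d}u$ with the fractional chain rule (using the $L^p$-equivalence just mentioned), by $|\nabla(|u|^{\frac 4d}u)|\lesssim|u|^{\frac 4d}|\nabla u|$ together with H\"older in space, and finally by H\"older in time. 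Because the nonlinearity is energy-\emph{subcritical} --- one has $\frac 4d<\frac{4}{d-2}$ for every $d\ge 3$ --- the admissible pairs can be chosen so that the time-integration gains a positive power $T^{\theta}$, $\theta=\theta(d)>0$; hence for $T=T(\|u_0\|_{H^1})$ small enough $\Phi$ maps a closed ball of $Y_T$ into itself and is a contraction there (the difference being measured in a weaker norm to accommodate non-smooth nonlinearities when $d\ge 5$), and its fixed point is the unique solution $u\in C([0,T];H^1)$. Iterating the local construction then produces a maximal existence time $T^{*}\in(0,+\infty]$.

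The remaining assertions are routine. Blow-up alternative: because the local existence time depends only on $\|u_0\|_{H^1}$, if $T^{*}<+\infty$ and $\|u(t_n)\|_{H^1}$ remained bounded along some sequence $t_n\uparrow T^{*}$ then the solution could be extended beyond $T^{*}$, a contradiction; hence $\|u(t)\|_{H^1}\to\infty$ as $t\uparrow T^{*}$, and since $\|u(t)\|_{L^2}=\|u_0\|_{L^2}$ is conserved this forces $\|\nabla u(t)\|_{L^2}\to\infty$. Conservation laws: one first checks $M(u(t))=M(u_0)$ and $E(u(t))=E(u_0)$ for sufficiently regular data, where the solution is classical and one may differentiate $M(u(t))$ and $E(u(t))$ in $t$ and use the self-adjointness of $P_c$ to see the derivatives vanish, and then passes to general $u_0\in H^1$ by the continuous dependence of the flow on the datum in $H^1$ that the fixed-point scheme also yields. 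In short, the genuinely substantive part of the argument is importing the two linear facts above --- the Strichartz estimates for $e^{-itP_c}$ and the $L^p$-equivalence of the $P_c$-adapted Sobolev spaces, both delicate precisely because $c|x|^{-2}$ is scaling-critical and long-range --- while everything downstream is standard; an alternative route that sidesteps them is the abstract energy method of \cite{OkazawaSuzukiYokota} (regularize $P_c$, derive uniform a priori bounds, and pass to the limit by compactness).
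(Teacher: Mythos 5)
First, a point of comparison: the paper does not prove Theorem \ref{theorem local well-posedness} at all; it is quoted from \cite{OkazawaSuzukiYokota}, and the text immediately after the statement records that in that reference \emph{existence} is obtained by an abstract energy method (regularize the operator, derive a priori $H^1$ bounds from the energy structure, pass to the limit by compactness), while \emph{uniqueness} is obtained separately from the Strichartz estimates of \cite{BurqPlanchonStalkerTahvildar-Zadeh}. Your proposal replaces this by a single Strichartz fixed-point argument. That is indeed the route that has become standard for such equations, but it does not prove the theorem in the stated generality, and the failure occurs exactly at the step you wave through with the phrase ``valid for $p$ in a range adapted to $c$ that is adequate for the exponents appearing below.''

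The inequality $\|\nabla f\|_{L^p}\lesssim\|\sqrt{P_c}f\|_{L^p}$ --- the direction you need in order to pass from the $\sqrt{P_c}$-based Strichartz norms back to a genuine derivative of $u$ inside the nonlinear estimate --- holds, sharply by \cite{KillipMiaoVisanZhangZheng-sobolev}, only for $\frac{1}{p}>\frac{1+\sigma}{d}$ with $\sigma=\frac{d-2}{2}-\sqrt{\lambda(d)-c}$. As $c\uparrow\lambda(d)$ this range degenerates to $p<2$, so the only derivative bound recoverable from the iteration norms is $\nabla u\in L^\infty_t L^2_x$ (which is just Hardy's inequality). Feeding $\nabla u\in L^2_x$ into the H\"older estimate for $|u|^{\frac{4}{d}}\nabla u$ and requiring the output to lie in a dual-admissible Lebesgue space forces $u\in L^\beta_x$ with $\beta\geq 4$, whereas the scheme only controls $u$ in $L^\beta_x$ for $\beta\leq\frac{2d}{d-2}$, and $\frac{2d}{d-2}<4$ once $d\geq 5$. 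Hence for $d\geq 5$ and $c$ close to $\lambda(d)$ your contraction does not close in $H^1$, and even away from this regime the admissible exponents must be chosen as a function of $\sigma$, not uniformly in $c$. This is precisely why the paper imports existence from the energy method of \cite{OkazawaSuzukiYokota} (see also the discussion in \cite{Dinh-inverse}), which never commutes $\sqrt{P_c}$ past the nonlinearity and therefore covers the whole range $c<\lambda(d)$; Strichartz estimates enter only for uniqueness, where the difference of two solutions is estimated without derivatives and the exponent restrictions are harmless. Your blow-up alternative and conservation-law arguments are fine as stated, but to make the existence part honest you must either restrict $d$ and $c$ so that the Sobolev-equivalence ranges accommodate your exponents, or fall back on the energy-method construction you mention only in passing.
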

	We refer the reader to \cite[Theorem 5.1]{OkazawaSuzukiYokota} for the proof of the above local well-posedness result. Note that the existence of local solutions is based on a refined energy method, and the uniqueness follows from Strichartz estimates which are shown by Burq-Planchon-Stalker-Zadeh in \cite{BurqPlanchonStalkerTahvildar-Zadeh}.
	
	The main purpose of this paper is to study dynamical properties of blow-up solutions to \eqref{NLS inverse square}, including mass concentration, limiting profile and the characterization of finite time blow-up solutions with minimal mass. Such phenomena were extensively studied in the last decades especially for the mass-critical nonlinear Schr\"odinger equation (NLS) (i.e. $c=0$ in $(\ref{NLS inverse square})$). For the mass-critical NLS, the mass concentration was first established by Tsutsumi \cite{Tsutsumi} and Merle-Tsutsumi \cite{MerleTsutsumi}. The limiting profile of finite time blow-up solutions was obtained by Weinstein in \cite{Weinstein-2}. The characterization of finite time blow-up solutions with minimal mass was obtained by Merle in \cite{Merle}. Based on a refined compactness lemma, Hmidi-Keraani in \cite{HmidiKeraani} gave much simpler proofs of all the aforementioned results. It is their approach that we are going to pursue in the sequel.
	
	Following the idea of Hmidi-Keraani in \cite{HmidiKeraani}, to study dynamical properties of finite time blow-up solutions for $(\ref{NLS inverse square})$, we first need the profile decomposition of bounded sequences in $H^1$ related to $(\ref{NLS inverse square})$. This profile decomposition was proved recently by the first author in \cite{Bensouilah}. Thanks to this profile decomposition, a refined version of compactness lemma related to $(\ref{NLS inverse square})$ was shown. With the help of this refined compactness lemma, we are able to study dynamical properties of finite time blow-up solutions for $(\ref{NLS inverse square})$. 
	
	The mass concentration for non-radial blow-up solutions was established by the first author in \cite{Bensouilah} for the case $0<c<\lambda(d)$. Here we extend this result to $c<\lambda(d)$. We also give an improvement of the mass concentration for radial blow-up solutions in the case $c<0$. This improvement is due to the sharp radial Gagliardo-Nirenberg inequality related to $(\ref{NLS inverse square})$ for $c<0$. More precisely, we prove the following result.
	\begin{theorem}[Mass concentration] \label{theorem mass concentration}
		Let $d\geq 3$, $c \ne 0$ and $c<\lambda(d)$. Let $u_0 \in H^1$ be such that the corresponding solution $u$ to $(\ref{NLS inverse square})$ blows up at finite time $0<T<+\infty$. Let $a(t)>0$ be such that 
		\begin{align}
		a(t) \|\nabla u(t)\|_{L^2} \rightarrow \infty, \label{mass concentration condition}
		\end{align}
		as $t\uparrow T$. Then there exists $x(t) \in \R^d$ such that
		\begin{align}
		\liminf_{t\uparrow T} \int_{|x-x(t)| \leq a(t)} |u(t,x)|^2 dx \geq \|Q_{\overline{c}}\|^2_{L^2}, \label{mass concentration}
		\end{align}
		where $\overline{c} =\max\{c,0\}$. Moreover, in the case $c<0$, if  we assume in addition that $u_0$ is radial, then $(\ref{mass concentration})$ can be improved to
		\begin{align}
		\liminf_{t\uparrow T} \int_{|x| \leq a(t)} |u(t,x)|^2 dx \geq \|Q_{c,\emph{rad}}\|^2_{L^2}. \label{radial mass concentration}
		\end{align}
		Here $Q_c$ and $Q_{c,\emph{rad}}$ are given in Theorem $\ref{theorem sharp gagliardo-nirenberg}$.
	\end{theorem}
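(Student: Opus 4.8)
The plan is to follow the Hmidi–Keraani strategy adapted to the inverse-square setting. First I would set up the rescaling: fix a blow-up solution $u$ on $[0,T)$ and choose a sequence $t_n \uparrow T$. Set $\rho_n := \|\nabla Q_{\overline{c}}\|_{L^2} / \|\nabla u(t_n)\|_{L^2} \to 0$ and define the rescaled functions $v_n(x) := \rho_n^{d/2} u(t_n, \rho_n x)$. A direct computation using the conservation of mass and energy shows that $\|v_n\|_{L^2} = \|u_0\|_{L^2}$, that $\|\nabla v_n\|_{L^2} = \|\nabla Q_{\overline{c}}\|_{L^2}$ is fixed, and that the potential and nonlinear terms in the (scale-invariant) energy scale so that $\rho_n^2 E(u(t_n)) \to 0$; hence the potential energy $\frac{c}{2}\int |x|^{-2}|v_n|^2\,dx$ and the nonlinear term $\frac{d}{2d+4}\int |v_n|^{4/d+2}\,dx$ become asymptotically comparable. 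In particular $(v_n)$ is bounded in $H^1$ and $\liminf_n \int |v_n|^{4/d+2}\,dx > 0$, so the refined compactness lemma (obtained via the profile decomposition in \cite{Bensouilah}) applies: up to a subsequence there exist $V \in H^1$ and $x_n \in \R^d$ with $v_n(\cdot + x_n) \rightharpoonup V$ weakly in $H^1$ and $\|V\|_{L^2} \geq \|Q_{\overline{c}}\|_{L^2}$.

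Next I would translate this back to the original variables. Since weak $H^1$ convergence of $v_n(\cdot+x_n)$ to $V$ implies, for every fixed $R>0$, the lower semicontinuity estimate $\liminf_n \int_{|x|\leq R} |v_n(x+x_n)|^2\,dx \geq \int_{|x|\leq R} |V(x)|^2\,dx$, and since $R$ can be taken large so the right side approaches $\|V\|_{L^2}^2 \geq \|Q_{\overline{c}}\|_{L^2}^2$, we get for any fixed $R$
\[
\liminf_{n\to\infty} \int_{|x-y_n|\leq R\rho_n} |u(t_n,x)|^2\,dx \geq \|Q_{\overline{c}}\|_{L^2}^2 - o_R(1),
\]
where $y_n := \rho_n x_n$. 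Now use the hypothesis $(\ref{mass concentration condition})$: since $a(t_n)\|\nabla u(t_n)\|_{L^2}\to\infty$, we have $a(t_n)/\rho_n \to \infty$, so eventually $R\rho_n \leq a(t_n)$; enlarging the integration region only increases the integral. A standard diagonal argument over $R\to\infty$ and over all sequences $t_n\uparrow T$ then yields $(\ref{mass concentration})$ with $x(t)$ defined by interpolating the $y_n$ (the usual care is needed to pass from a statement along subsequences to a $\liminf$ over the full interval, but this is routine once one argues by contradiction).

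For the radial improvement when $c<0$, I would repeat the argument but now using the \emph{radial} refined compactness lemma and the sharp radial Gagliardo–Nirenberg constant $\|Q_{c,\mathrm{rad}}\|_{L^2}$. Since $u_0$ is radial, each $v_n$ is radial, so in the profile decomposition the translation parameters can be taken to be $x_n = 0$ (radial symmetry forces the concentration point to the origin, as $|x|^{-2}$ breaks translation invariance but not the rotational one, and any nonzero translation would destroy the radial structure of the limiting profile); thus the concentration is centered at $x(t) = 0$, giving $(\ref{radial mass concentration})$.

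The main obstacle I anticipate is the refined compactness lemma itself — specifically extracting the profile with the sharp $L^2$ lower bound $\|Q_{\overline{c}}\|_{L^2}$ from a bounded $H^1$ sequence with bounded-below nonlinear energy, when the operator $P_c$ breaks translation invariance. The profile decomposition from \cite{Bensouilah} must be invoked carefully: for $c>0$ the inverse-square term is positive and can only help, so the relevant best constant is governed by $Q_c$; but the profiles that can "escape to spatial infinity" see the potential vanish and are governed instead by the classical ground state $Q_0$, and one must check that $\|Q_0\|_{L^2} \geq \|Q_c\|_{L^2}$ (true since adding a positive potential shrinks the Gagliardo–Nirenberg constant), so the worst case is indeed $\overline{c}=\max\{c,0\}$ — explaining why $(\ref{mass concentration})$ features $Q_{\overline{c}}$ and not $Q_c$. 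For $c<0$ the potential term has an unfavorable sign, and controlling it in the non-radial case is exactly why one only recovers $\|Q_0\|_{L^2} = \|Q_{\overline{c}}\|_{L^2}$ there, while radial symmetry (via the sharp radial Hardy-type inequality behind $Q_{c,\mathrm{rad}}$) allows one to keep the better constant.
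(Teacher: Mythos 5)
Your proposal is correct and follows essentially the same route as the paper: rescale by the blow-up rate, use conservation of mass and energy to verify the hypotheses of the refined compactness lemma from the profile decomposition of \cite{Bensouilah}, extract a weak limit $V$ with $\|V\|_{L^2}\geq\|Q_{\overline{c}}\|_{L^2}$, and transfer the $L^2$ lower bound back via $a(t_n)/\rho_n\to\infty$, with the radial compactness lemma handling the case $c<0$. The only cosmetic differences are that the paper normalizes with the $\dot{H}^1_c$ norm (making $m$ and $M$ exact rather than comparable) and dispatches your ``interpolating the $y_n$'' step cleanly by first taking $\sup_{y\in\R^d}$ of the localized mass and then noting this supremum is attained at some $x(t)$.
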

	\begin{remark}
		\begin{itemize} 
			\item By using a standard argument of Merle-Rapha\"el \cite{MerleRaphael}, we have the following blow-up rate: if $u$ is a solution to $(\ref{NLS inverse square})$ blows up at finite time $0<T<+\infty$, then there exists $C>0$ such that
			\[
			\|\nabla u(t)\|_{L^2} > \frac{C}{\sqrt{T-t}}.
			\]
			\item Rewriting 
			\[
			\frac{1}{a(t)\|\nabla u(t)\|_{L^2}} = \frac{\sqrt{T-t}}{a(t)} \frac{1}{\sqrt{T-t} \|\nabla u(t)\|_{L^2} } < C\frac{\sqrt{T-t}}{a(t)},
			\]
			we see that any function $a(t)>0$ satisfying $\frac{\sqrt{T-t}}{a(t)} \rightarrow 0$ as $t\rightarrow T$ fulfills $(\ref{mass concentration condition})$.
		\end{itemize}
	\end{remark}
	
	The characterization of finite time blow-up solutions for $(\ref{NLS inverse square})$ with minimal mass was recently established by Csobo-Genoud in \cite{CsoboGenoud} in the case $0<c<\lambda(d)$. They showed that up to symmetries of the equation, the only finite time blow-up solutions for $(\ref{NLS inverse square})$ with minimal mass are the pseudo-conformal transformation of ground state standing waves. Note that since the uniqueness of ground states for $(\ref{NLS inverse square})$ is not yet known, one needs to define properly a notion of ground states for $(\ref{NLS inverse square})$. The proof of their result is based on the concentration-compactness lemma (see e.g. \cite[Proposition 1.7.6]{Cazenave}). The key point is the limiting profile result (see \cite[Proposition 4, p.120]{CsoboGenoud}). In this paper, we aim to give a simple proof for the above result of Csobo-Genoud in the case $0<c<\lambda(d)$. Our approach is based on the profile decomposition of \cite{Bensouilah}. This allows us to give a simple version of the limiting profile compared to the one of \cite{CsoboGenoud}. We also extend Csobo-Genoud's result to negative values of $c$. Since the sharp non-radial Gagliardo-Nirenberg inequality for $c<0$ is never attained for $c<0$. We need to restrict our attention only to finite time radial blow-up solutions. More precisely, we prove the following result. 
	\begin{theorem}[Characterization of finite time blow-up solutions with minimal mass] \label{theorem characterization minimal mass}
		\begin{itemize}
			\item Let $d\geq 3$ and $0<c<\lambda(d)$. Let $u_0 \in H^1$ be such that $\|u_0\|_{L^2} = M_{\emph{gs}}$. Suppose that the corresponding solution $u$ to $(\ref{NLS inverse square})$ blows up at finite time $0<T<+\infty$. Then there exist $Q \in \mathcal{G}$, $\theta \in \R$ and $\lambda>0$ such that
			\begin{align}
			u_0(x) = e^{i\theta} e^{i\frac{\lambda^2}{T}} e^{-i\frac{|x|^2}{4T}} \left(\frac{\lambda}{T}\right)^{\frac{d}{2}} Q\left(\frac{\lambda x}{T}\right). \label{characterization initial data}
			\end{align}
			In particular, 
			$
			u(t,x) = S_{Q, T, \theta, \lambda} (t,x)
			$,
			where
			\[
			S_{Q,T,\theta, \lambda}(t,x):= e^{i\theta} e^{i\frac{\lambda^2}{T-t}} e^{-i\frac{|x|^2}{4(T-t)}} \left(\frac{\lambda}{T-t} \right)^{\frac{d}{2}} Q \left( \frac{\lambda x}{T-t}\right).
			\]
			\item Let $d\geq 3$ and $c<0$. Let $u_0 \in H^1_{\emph{rad}}$ be such that $\|u_0\|_{L^2} = M_{\emph{gs,rad}}$. Suppose that the corresponding solution $u$ to $(\ref{NLS inverse square})$ blows up at finite time $0<T<+\infty$. Then there exist $Q_{\emph{rad}} \in \mathcal{G}_{\emph{rad}}$, $\vartheta \in \R$ and $\rho>0$ such that
			\begin{align}
			u_0(x) = e^{i\vartheta} e^{i\frac{\rho^2}{T}} e^{-i\frac{|x|^2}{4T}} \left(\frac{\rho}{T}\right)^{\frac{d}{2}} Q_{\emph{rad}}\left(\frac{\rho x}{T}\right). \label{characterization initial data radial}
			\end{align}
			In particular, 
			$
			u(t,x) = S_{Q_{\emph{rad}}, T, \vartheta, \rho} (t,x)
			$,
			where 
			\[
			S_{Q_{\emph{rad}},T,\vartheta, \rho}(t,x):= e^{i\vartheta} e^{i\frac{\rho^2}{T-t}} e^{-i\frac{|x|^2}{4(T-t)}} \left(\frac{\rho}{T-t} \right)^{\frac{d}{2}} Q_{\emph{rad}} \left( \frac{\rho x}{T-t}\right).
			\]
		\end{itemize}
	\end{theorem}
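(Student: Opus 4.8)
The plan is to follow the Hmidi--Keraani strategy adapted to the inverse-square setting, the key ingredients being the refined compactness lemma obtained from the profile decomposition of \cite{Bensouilah} and the variational characterization of $Q_{\overline{c}}$ (resp. $Q_{c,\mathrm{rad}}$) through the sharp Gagliardo--Nirenberg inequality of Theorem~\ref{theorem sharp gagliardo-nirenberg}. First I would set up the standard rescaling: given a minimal-mass blow-up solution $u$ with blow-up time $T$, choose a sequence $t_n \uparrow T$ and define $\rho_n := \|\nabla Q_{\overline c}\|_{L^2}/\|\nabla u(t_n)\|_{L^2} \to 0$ and $v_n(x) := \rho_n^{d/2} u(t_n, \rho_n x)$, so that $\|v_n\|_{L^2} = \|u_0\|_{L^2}$, $\|\nabla v_n\|_{L^2} = \|\nabla Q_{\overline c}\|_{L^2}$. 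Using the conservation of energy and the fact that $E(u(t_n))$ stays bounded while $\|\nabla u(t_n)\|_{L^2}\to\infty$, one deduces $\rho_n^2 E(u_0) \to 0$, hence $\|v_n\|_{L^{2+4/d}}^{2+4/d} \to \tfrac{2d+4}{d}\cdot\tfrac12\|\nabla Q_{\overline c}\|_{L^2}^2$ (taking care of the $|x|^{-2}$ term, which scales the same way and whose contribution to the rescaled energy also vanishes like $\rho_n^2$ in the non-radial case, or is handled via the radial inequality in the radial case). Thus $\{v_n\}$ is a minimizing-type sequence for the sharp Gagliardo--Nirenberg inequality constrained to mass $M_{\mathrm{gs}}$ (resp.\ $M_{\mathrm{gs,rad}}$).

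Next I would invoke the refined compactness lemma to extract a profile: since $\{v_n\}$ is bounded in $H^1$ and its $L^{2+4/d}$ norm does not vanish, after translating by a suitable sequence $y_n\in\R^d$ (in the radial case one must argue $y_n$ stays bounded, which is where radial symmetry is essential and where the inverse-square potential interacts delicately with translations — this is the place the non-radial $c<0$ case genuinely fails), a subsequence of $v_n(\cdot + y_n)$ converges weakly to some $V\not\equiv 0$. The sharpness in the Gagliardo--Nirenberg inequality combined with the mass and gradient constraints forces $\|V\|_{L^2}\ge M_{\mathrm{gs}}$; but $\|V\|_{L^2}\le \liminf\|v_n\|_{L^2} = M_{\mathrm{gs}}$, so equality holds, the convergence is strong in $L^2$ and in fact in $H^1$, and $V$ is (up to scaling and phase) a ground state, i.e.\ $V\in\mathcal G$ (resp.\ $\mathcal G_{\mathrm{rad}}$). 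In other words, one obtains the limiting profile statement: there are $\rho_n\to 0$, $y_n$, $\gamma_n$ with $e^{i\gamma_n}\rho_n^{d/2} u(t_n,\rho_n\cdot + y_n) \to Q$ strongly in $H^1$ along a subsequence, with $Q$ a ground state. In the radial case $y_n \equiv 0$.

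The final step is to upgrade this ``profile at a sequence of times'' into the rigid pseudo-conformal form \eqref{characterization initial data}. Here I would use the pseudo-conformal symmetry of \eqref{NLS inverse square}: because the inverse-square potential $c|x|^{-2}$ is scale-invariant of exactly the critical homogeneity, the equation is invariant under the pseudo-conformal transformation, and the conserved quantities (mass, energy, and the virial/pseudo-conformal energy $\|(x + 2it\nabla)u\|_{L^2}^2$-type functional) behave exactly as in the $c=0$ case. The argument then proceeds as in Merle's original proof / Hmidi--Keraani: apply the pseudo-conformal transform to $u$ to get a solution $\tilde u$ which is global-in-time-reversed with $\tilde u$ having minimal mass; the limiting profile information shows $\tilde u(0)$ (or the relevant time) is exactly a ground state, hence $\tilde u(t) = e^{it\omega}e^{i\theta}\lambda^{d/2}Q(\lambda x)$ is a standing wave, and transforming back yields precisely $u(t,x) = S_{Q,T,\theta,\lambda}(t,x)$ with $u_0$ given by \eqref{characterization initial data}. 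To make this rigorous one typically shows the function $g(t):=\|x\, w(t)\|_{L^2}^2$ (for $w$ the pseudo-conformal transform) is constant, forcing $w$ to be a soliton; the computation of $\frac{d^2}{dt^2}\|xu\|_{L^2}^2$ uses the virial identity, and for the inverse-square potential the extra term from $c|x|^{-2}$ contributes with the right sign/scaling so that the $L^2$-critical virial identity is unchanged (this is a known computation, cf.\ \cite{CsoboGenoud, Dinh-inverse}).

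I expect the main obstacle to be the second step in the radial case: controlling the translation parameters $y_n$. In the non-radial $0<c<\lambda(d)$ case the potential actually helps pin the concentration point near the origin in a weak sense, but the cleanest route is to show the concentration point must be at the origin (or that one may reduce to it), using that the sharp non-radial Gagliardo--Nirenberg constant $C_{\mathrm{GN}}(c)$ with $c>0$ is strictly larger than $C_{\mathrm{GN}}(0)$ and is attained \emph{only} by functions centered at the origin; a bubble escaping to infinity would see the potential-free constant $C_{\mathrm{GN}}(0)$ and hence could not carry enough $L^{2+4/d}$-mass to be consistent with the minimal-mass constraint. Making this dichotomy quantitative — separating the ``mass near origin'' and ``mass at infinity'' contributions in the profile decomposition and comparing Gagliardo--Nirenberg constants — is the technical heart of the argument; in the radial case it is forced by symmetry, while for $c<0$ non-radial it simply cannot be closed, which is exactly why Theorem~\ref{theorem characterization minimal mass} restricts to radial data when $c<0$.
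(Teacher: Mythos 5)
Your first two steps (rescaling, the refined compactness lemma, and the limiting profile along a sequence $t_n\uparrow T$) match the paper's route: this is exactly Theorem \ref{theorem limiting profile minimal mass}, proved via Lemma \ref{lemma compactness lemma} and the variational structure of ground states (Lemma \ref{lemma variational structure ground states}), and your remarks on controlling the translation parameters (radial symmetry when $c<0$, comparison of $C_{\mathrm{GN}}(c)$ with $C_{\mathrm{GN}}(0)$ when $c>0$) are consistent with how that compactness is obtained. The genuine gap is in your final rigidity step. You propose to pass through the pseudo-conformal transform and to show that $g(t)=\|x\,w(t)\|_{L^2}^2$ is constant, but $u_0$ is only assumed to lie in $H^1$: there is no a priori finiteness of the variance $\|x\,u(t)\|_{L^2}$, so neither $g$ nor the pseudo-conformal energy is defined at the outset, and the virial identity of Lemma \ref{lemma virial identity} cannot be invoked directly. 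Establishing finite variance is precisely the technical heart of the rigidity argument, and your sketch does not address it.

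The paper closes this gap (following Banica, Hmidi--Keraani and Csobo--Genoud) with a localized virial argument: introduce $U_R(t)=\int\varphi_R(x)|u(t,x)|^2\,dx$ for a truncation $\varphi_R$ of $|x|^2$ with $|\nabla\varphi_R|^2\leq C\varphi_R$, and use Banica's Cauchy--Schwarz inequality (Lemma \ref{lemma cauchy-schwarz inequality}; this is where the minimal-mass hypothesis enters a second time, since it guarantees $E(u)\geq 0$ and yields $\bigl|\int\nabla\varphi_R\cdot\mathrm{Im}(u\nabla\overline{u})\,dx\bigr|\leq\sqrt{2E(u)}\,\bigl(\int|\nabla\varphi_R|^2|u|^2dx\bigr)^{1/2}$) to get $\bigl|\frac{d}{dt}\sqrt{U_R(t)}\bigr|\leq C(u_0)$ uniformly in $R$. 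The limiting profile gives $|u(t_n,x)|^2dx-\|Q\|_{L^2}^2\delta_{x=x_n}\rightharpoonup 0$, whence $U_R(t_n)\rightarrow 0$, and integrating from $t$ to $t_n$ yields $U_R(t)\leq C(u_0)(T-t)^2$ uniformly in $R$. Only now does one conclude that $\|x\,u(t)\|_{L^2}^2\leq C(u_0)(T-t)^2<\infty$, that $x_n$ cannot escape to infinity and in fact tends to $0$, and, via the identity $\|xu(t)\|_{L^2}^2=8t^2E(e^{i|x|^2/4t}u_0)$, that $E(e^{i|x|^2/4T}u_0)=0$. The conclusion then follows not by transforming the whole solution pseudo-conformally but simply by applying Lemma \ref{lemma variational structure ground states} to $e^{i|x|^2/4T}u_0$, which has minimal mass and zero energy, hence equals $e^{i\tilde\theta}\tilde\lambda^{d/2}\tilde{Q}(\tilde\lambda x)$ for some $\tilde{Q}\in\mathcal{G}$; uniqueness for the Cauchy problem then gives $u=S_{\tilde{Q},T,\theta,\lambda}$. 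Without this localized virial/Banica step your argument cannot be closed, although your overall architecture is otherwise sound.
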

	We refer the reader to Section $\ref{section blowup profile}$ for the notations $M_{\text{gs}}$, $\mathcal{G}$, $M_{\text{gs,rad}}$ and $\mathcal{G}_{\text{rad}}$. 
	
	The paper is organized as follows. In Section 2, we recall sharp Gagliardo-Nirenberg inequalities and the compactness lemma related to $(\ref{NLS inverse square})$. In Section 3, we give the proof of the mass concentration given in Theorem $\ref{theorem mass concentration}$. In Section 4, we prove a simple version of the limiting profile result compared to the one in \cite{CsoboGenoud}. Using this limiting profile, we give the proof of the characterization of finite time blow-up solutions with minimal mass given in Theorem $\ref{theorem characterization minimal mass}$. 
	
	\section{Preliminaries}
	\setcounter{equation}{0}
	
	\subsection{Sharp Gagliardo-Nirenberg inequalities}
	In this subsection, we recall sharp Gagliardo-Nirenberg inequalities related to $(\ref{NLS inverse square})$. Let us start with the sharp non-radial Gagliardo-Nirenberg inequality
	\begin{align}
	\|u\|^{\frac{4}{d} +2}_{L^{\frac{4}{d}+2}} \leq C_{\text{GN}}(c) \|u\|^{\frac{4}{d}}_{L^2} \|u\|^2_{\dot{H}^1_c}, \label{sharp gagliardo-nirenberg}
	\end{align}
	where the sharp constant $C_{\text{GN}}(c)$ is defined by
	\[
	C_{\text{GN}}(c):= \sup \left\{ J_c(u) \ : \ u \in H^1 \backslash \{0\} \right\}.
	\]
	Here $J_c(u)$ is the Weinstein functional
	\begin{align}
	J_c(u):= \|u\|^{\frac{4}{d}+2}_{L^{\frac{4}{d}+2}} \div \left[ \|u\|^{\frac{4}{d}}_{L^2} \|u\|^2_{\dot{H}^1_c} \right]. \label{define weinstein functional}
	\end{align}
	We also recall the sharp radial Gagliardo-Nirenberg inequality
	\begin{align}
	\|u\|^{\frac{4}{d}+2}_{L^{\frac{4}{d}+2}} \leq C_{\text{GN}}(c,\text{rad}) \|u\|^{\frac{4}{d}}_{L^2} \|u\|^2_{\dot{H}^1_c},  \label{sharp radial gagliardo-nirenberg}
	\end{align}
	where the sharp constant $C_{\text{GN}}(c,\text{rad})$ is defined by
	\[
	C_{\text{GN}}(c,\text{rad}) : = \sup \left\{ J_c(u) \ : \ u \in H^{1}_{\text{rad}} \backslash \{0\}\right\},
	\]
	where $H^1_{\text{rad}}$ is the space of radial $H^1$ functions. When $c=0$, Weinstein in \cite{Weinstein} proved that the sharp constant $C_{\text{GN}}(0)$ is attained by the fuction $Q_0$ which is the unique (up to symmetries) positive radial solution of
	\begin{align}
	\Delta Q_0 -Q_0 + |Q_0|^{\frac{4}{d}} Q_0=0. \label{elliptic equation c=0}
	\end{align}
	We have the following result (see \cite{KillipMurphyVisanZheng} and also \cite{Dinh-inverse}).
	\begin{theorem}[Sharp Gagliardo-Nirenberg inequalities] \label{theorem sharp gagliardo-nirenberg}
		Let $d\geq 3$ and $c\ne 0$ be such that $c<\lambda(d)$. Then $C_{\emph{GN}}(c) \in (0,\infty)$ and
		\begin{itemize}
			\item if $0<c<\lambda(d)$, then the equality in $(\ref{sharp gagliardo-nirenberg})$ is attained by a function $Q_c \in H^1$ which is a positive radial solution to the elliptic equation
			\begin{align}
			\Delta Q_c + c|x|^{-2} Q_c - Q_c + |Q_c|^{\frac{4}{d}} Q_c =0. \label{elliptic equation}
			\end{align}
			\item if $c<0$, then $C_{\emph{GN}}(c) = C_{\emph{GN}}(0)$ and the equality in $(\ref{sharp gagliardo-nirenberg})$ is never attained. However, the equality in $(\ref{sharp radial gagliardo-nirenberg})$ is attained by a function $Q_{c,\emph{rad}} \in H^1_{\emph{rad}}$ which is a positive solution to the elliptic equation
			\begin{align}
			\Delta Q_{c,\emph{rad}} + c|x|^{-2} Q_{c,\emph{rad}} - Q_{c,\emph{rad}} + |Q_{c,\emph{rad}}|^{\frac{4}{d}} Q_{c,\emph{rad}} =0. \label{radial elliptic equation}
			\end{align}
		\end{itemize}
	\end{theorem}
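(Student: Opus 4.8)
The plan is to realize each optimal constant by a direct minimization: I would run a maximizing sequence for the Weinstein functional $J_c$, pass to radial functions by Schwarz symmetrization when this is favorable (namely for $c>0$), and then use the compact Strauss embedding $H^1_{\text{rad}}(\R^d)\hookrightarrow L^{\frac4d+2}(\R^d)$ together with weak lower semicontinuity. The fact behind everything is that, since $c<\lambda(d)$, the sharp Hardy inequality \eqref{sharp hardy inequality} makes $\|\cdot\|_{\dot H^1_c}\sim\|\cdot\|_{\dot H^1}$; more precisely $\|u\|_{\dot H^1_c}^2\ge(1-c/\lambda(d))\|\nabla u\|_{L^2}^2$ when $0<c<\lambda(d)$ and $\|u\|_{\dot H^1_c}^2\ge\|\nabla u\|_{L^2}^2$ when $c<0$. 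Combined with the classical inequality $\|u\|_{L^{\frac4d+2}}^{\frac4d+2}\le C_{\text{GN}}(0)\|u\|_{L^2}^{\frac4d}\|\nabla u\|_{L^2}^2$, this gives $C_{\text{GN}}(c)\in(0,\infty)$ at once.

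For $0<c<\lambda(d)$, I would take a maximizing sequence $(u_n)$ for $J_c$ and, using the scaling invariance $u\mapsto\mu u(\lambda\,\cdot)$ of $J_c$, normalize $\|u_n\|_{L^2}=\|u_n\|_{L^{\frac4d+2}}=1$, so that $\|u_n\|_{\dot H^1_c}^2\to C_{\text{GN}}(c)^{-1}$ and $(u_n)$ is bounded in $H^1$. Replacing each $u_n$ by its symmetric-decreasing rearrangement $u_n^*$ leaves the $L^p$ norms, does not increase $\|\nabla u_n^*\|_{L^2}$ (P\'olya--Szeg\H{o}) and does not decrease $\int|x|^{-2}|u_n^*|^2$ (Hardy--Littlewood, as $|x|^{-2}$ is radially decreasing); since $c>0$ this forces $\|u_n^*\|_{\dot H^1_c}\le\|u_n\|_{\dot H^1_c}$, so $(u_n^*)$ is again a normalized maximizing sequence, now bounded in $H^1_{\text{rad}}$. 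Along a subsequence $u_n^*\rightharpoonup Q$ in $H^1$; since $2<\frac4d+2<\frac{2d}{d-2}$ for $d\ge3$, the compact Strauss embedding gives $\|u_n^*\|_{L^{\frac4d+2}}\to\|Q\|_{L^{\frac4d+2}}=1$, so $Q\ne0$. Weak lower semicontinuity of $\|\cdot\|_{L^2}$ and of $\|\cdot\|_{\dot H^1_c}$ gives $\|Q\|_{L^2}\le1$ and $\|Q\|_{\dot H^1_c}^2\le C_{\text{GN}}(c)^{-1}$, while \eqref{sharp gagliardo-nirenberg} applied to $Q$ reads $\|Q\|_{L^2}^{\frac4d}\|Q\|_{\dot H^1_c}^2\ge C_{\text{GN}}(c)^{-1}$; hence all these are equalities and $J_c(Q)=C_{\text{GN}}(c)$. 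We may take $Q\ge0$ (replace it by $|Q|$, which does not increase $\|\cdot\|_{\dot H^1_c}$), $Q$ is radial, and as a constrained critical point it solves $\Delta Q+c|x|^{-2}Q=\alpha Q-\beta|Q|^{\frac4d}Q$; pairing with $Q$ and invoking the Pohozaev identity show $\alpha,\beta>0$, so a rescaling $Q_c(x):=\gamma Q(\delta x)$ with suitable $\gamma,\delta>0$ brings this to \eqref{elliptic equation}, and $Q_c>0$ on $\R^d\setminus\{0\}$ by the strong maximum principle away from the singularity.

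For $c<0$, the non-radial statements are soft. From $\|u\|_{\dot H^1_c}^2=\|\nabla u\|_{L^2}^2+|c|\int|x|^{-2}|u|^2\ge\|\nabla u\|_{L^2}^2$ one gets $J_c(u)\le J_0(u)\le C_{\text{GN}}(0)$, hence $C_{\text{GN}}(c)\le C_{\text{GN}}(0)$; testing $J_c$ on translates $Q_0(\cdot-y)$ with $|y|\to\infty$, where $Q_0$ is the optimizer of $J_0$ from \eqref{elliptic equation c=0}, for which $\int|x|^{-2}|Q_0(\cdot-y)|^2\to0$ (split over $\{|x|\le1\}$, using $Q_0\in L^\infty$ with decay and $|x|^{-2}\in L^1_{\text{loc}}$, and over $\{|x|>1\}$, using $|x|^{-2}\le1$ and $Q_0\in L^2$), gives the reverse bound, so $C_{\text{GN}}(c)=C_{\text{GN}}(0)$; and if \eqref{sharp gagliardo-nirenberg} were attained at some $u\ne0$, the chain $C_{\text{GN}}(0)=J_c(u)\le J_0(u)\le C_{\text{GN}}(0)$ would force $\int|x|^{-2}|u|^2=0$, i.e. $u\equiv0$, a contradiction, so it is never attained. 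For the radial inequality \eqref{sharp radial gagliardo-nirenberg}, the argument of the previous case applies directly on $H^1_{\text{rad}}$ with no rearrangement step — the only ingredients being the compact Strauss embedding and the weak lower semicontinuity of $\|\cdot\|_{L^2}$ and $\|\cdot\|_{\dot H^1_c}$, valid for every $c<\lambda(d)$ — producing a nonnegative radial maximizer $Q_{c,\text{rad}}$ that solves \eqref{radial elliptic equation} after rescaling and is positive on $\R^d\setminus\{0\}$.

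The main obstacle is the compactness of the maximizing sequence, which fails in the naive non-radial framework. It is recovered here by the passage to the radial class — legitimate in the symmetrization step precisely because $c>0$; for $c<0$ rearrangement would instead raise $\int|x|^{-2}|u|^2$ and lower $J_c$, which is exactly why the full inequality is then unattained — combined with the compact embedding $H^1_{\text{rad}}\hookrightarrow L^{\frac4d+2}$. Two points need care: that $\|\cdot\|_{\dot H^1_c}$, in spite of the sign of its potential term when $c>0$, is weakly lower semicontinuous — which holds because $c<\lambda(d)$ makes it an equivalent norm — and the positivity of the Lagrange multipliers, for which one uses the Pohozaev identity before rescaling to \eqref{elliptic equation}/\eqref{radial elliptic equation}. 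Alternatively, the non-radial existence for $0<c<\lambda(d)$ can be obtained without symmetrization by inserting the maximizing sequence into the profile decomposition of \cite{Bensouilah} and discarding the profiles escaping to spatial infinity, which see the free operator and hence the strictly smaller constant $C_{\text{GN}}(0)$.
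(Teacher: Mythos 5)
The paper does not actually prove Theorem \ref{theorem sharp gagliardo-nirenberg}: it imports it from \cite[Theorem 3.1]{KillipMurphyVisanZheng} and \cite[Theorem 4.1]{Dinh-inverse}. Your proposal essentially reconstructs the argument of those references, and it is correct in its main lines: symmetrization is admissible precisely when $0<c<\lambda(d)$ because P\'olya--Szeg\H{o} and Hardy--Littlewood then both push $\|\cdot\|_{\dot H^1_c}$ down, after which the compact embedding $H^1_{\text{rad}}\hookrightarrow L^{\frac4d+2}$ and weak lower semicontinuity of the equivalent Hilbert norm $\|\cdot\|_{\dot H^1_c}$ force the weak limit to be an optimizer; for $c<0$ the identity $C_{\text{GN}}(c)=C_{\text{GN}}(0)$ follows by translating $Q_0$ to spatial infinity, non-attainment from strictness of $J_c(u)<J_0(u)$ for $u\ne0$, and the radial problem is handled by the same compactness with no symmetrization step.

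Two points deserve tightening. First, in the verification that $\int|x|^{-2}|Q_0(x-y)|^2\,dx\to0$ as $|y|\to\infty$, your treatment of the region $\{|x|>1\}$ (``$|x|^{-2}\le1$ and $Q_0\in L^2$'') only bounds that contribution by $\|Q_0\|_{L^2}^2$, which does not tend to zero; you need a further split, e.g.\ into $\{1<|x|<|y|/2\}$, where $|x-y|\ge|y|/2$ and the decay of $Q_0$ makes the integrand small, and $\{|x|\ge|y|/2\}$, where $|x|^{-2}\le 4|y|^{-2}\to0$ while $\|Q_0\|_{L^2}$ is fixed. Second, in the radial case the maximizer $Q_{c,\text{rad}}$ is a critical point of $J_c$ only within $H^1_{\text{rad}}$, so the Euler--Lagrange equation is a priori only tested against radial perturbations; since for radial $Q$ every term of the first variation sees a test function only through its spherical average, this does yield the full equation \eqref{radial elliptic equation}, but the step should be stated. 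Neither issue affects the validity of the overall scheme, which coincides with the proof the paper cites.
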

	
	We refer the reader to \cite[Theorem 3.1]{KillipMurphyVisanZheng} (see also \cite[Theorem 4.1]{Dinh-inverse}) for the proof of the above result.
	
	\begin{remark} 
		\begin{itemize}
			\item In the case $0<c<\lambda(d)$, Theorem $\ref{theorem sharp gagliardo-nirenberg}$ shows that there exist positive radial solutions to the elliptic equation $(\ref{elliptic equation})$. However, unlike the case $c=0$, the uniqueness up to symmetries of these solutions is not known yet. We also have the following Pohozaev's identities \footnote{These identities can be proved rigorously by using the technique of \cite[Proposition 1]{BerestyckiLions}: first, considering Pohozaev's identities in $\Omega_{r,R}:= \{x \ : \ r <|x| <R\}$, and then showing the boundary term (on $\partial \Omega_{r,R}$) to converge to $0$ as $r \rightarrow 0$ and $R \rightarrow +\infty$.}:
			\begin{align*}
			\|Q_c\|^2_{L^2} = \frac{2}{d}\|Q_c\|^2_{\dot{H}^1_c} = \frac{2}{d+2} \|Q_c\|^{\frac{4}{d}+2}_{L^{\frac{4}{d}+2}}. 
			\end{align*}
			In particular,
			\[
			C_{\text{GN}}(c) = \frac{d+2}{d} \frac{1}{\|Q_c\|^{\frac{4}{d}}_{L^2}}.
			\]
			\item Since the above identities still hold true for $c=0$, we get from Theorem $\ref{theorem sharp gagliardo-nirenberg}$ that for any $c<\lambda(d)$,
			\begin{align}
			C_{\text{GN}}(c) = \frac{d+2}{d} \frac{1}{\|Q_{\overline{c}}\|^{\frac{4}{d}}_{L^2}}, \label{sharp constant gagliardo-nirenberg}
			\end{align}
			where $\overline{c}=\max\{c,0\}$.
			\item In the case $c<0$, we also have
			\begin{align*}
			\|Q_{c,\text{rad}}\|^2_{L^2} = \frac{2}{d}\|Q_{c,\text{rad}}\|^2_{\dot{H}^1_c} = \frac{2}{d+2} \|Q_{c,\text{rad}}\|^{\frac{4}{d}+2}_{L^{\frac{4}{d}+2}}. 
			\end{align*}
			In particular,
			\begin{align}
			C_{\text{GN}}(c,\text{rad}) = \frac{d+2}{d} \frac{1}{\|Q_{c,\text{rad}}\|^{\frac{4}{d}}_{L^2}}. \label{sharp constant gagliardo-nirenberg radial}
			\end{align}
			Note that since $C_{\text{GN}}(c,\text{rad})<C_{\text{GN}}(c)$, we see that for any $c<0$, 
			\[
			\|Q_0\|_{L^2} < \|Q_{c,\text{rad}}\|_{L^2}.
			\]
		\end{itemize}
	\end{remark}
	
	\subsection{Profile decomposition}
	In this subsection, we recall the profile decomposition related to the nonlinear Schr\"odinger equation with inverse-square potential. This profile decomposition was established recently by the first author in \cite{Bensouilah} for $0<c<\lambda(d)$. There is no difficulty to extend this result for negative values of $c$.
	\begin{proposition} [Profile decomposition] \label{proposition profile decomposition}
		Let $d\geq 3$ and $c <\lambda(d)$. Let $(v_n)_{n\geq 1}$ be a bounded sequence in $H^1$. Then there exist a subsequence still denoted by $(v_n)_{n\geq 1}$, a family $(x_n^j)_{n\geq 1}$ of sequences in $\R^d$ and a sequence $(V^j)_{j\geq 1}$ of $H^1$-functions such that
		\begin{itemize}
			\item[i)] for every $j\ne k$, 
			\begin{align}
			|x^j_n-x^k_n| \rightarrow \infty, \label{pairwise orthogonality}
			\end{align}
			as $n\rightarrow \infty$;
			\item[ii)] for every $l \geq 1$ and every $x \in \R^d$, we have
			\[
			v_n(x) = \sum_{j=1}^l V^j(x-x^j_n) + v^l_n(x),
			\]
			with 
			\begin{align}
			\limsup_{n\rightarrow \infty} \|v^l_n\|_{L^q} \rightarrow 0, \label{profile error}
			\end{align}
			as $l \rightarrow \infty$ for every $2<q<\frac{2d}{d-2}$.
		\end{itemize}
		Moreover, for every $l\geq 1$,
		\begin{align}
		\|v_n\|^2_{L^2} &= \sum_{j=1}^l \|V^j\|^2_{L^2} + \|v^l_n\|^2_{L^2} + o_n(1), \label{L2 norm expansion}\\
		\|v_n\|^2_{\dot{H}^1_c} &= \sum_{j=1}^l \|V^j(\cdot- x^j_n)\|_{\dot{H}^1_c} + \|v^l_n\|_{\dot{H}^1_c} + o_n(1), \label{H1c norm expansion}
		\end{align}
		as $n\rightarrow \infty$.
	\end{proposition}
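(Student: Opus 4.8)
The plan is to reduce everything to the classical, translation-invariant $H^1$ profile decomposition of G\'erard and Hmidi--Keraani; the only genuinely new point is the expansion \eqref{H1c norm expansion}, which involves the \emph{non}-translation-invariant weight $|x|^{-2}$. Since $c<\lambda(d)$, the sharp Hardy inequality \eqref{sharp hardy inequality} gives $\|\cdot\|_{\dot{H}^1_c}\sim\|\cdot\|_{\dot{H}^1}$, so $(v_n)_{n\geq1}$ is bounded in $H^1$ in the usual sense and the standard extraction applies verbatim. For $0<c<\lambda(d)$ this proposition is exactly the one proved in \cite{Bensouilah}, and for $c<0$ nothing in the argument changes, since only the equivalence of norms is used; thus one only has to recall the extraction scheme and then upgrade the plain $\dot{H}^1$-expansion to \eqref{H1c norm expansion}.

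First I would recall the extraction. Set $v_n^0:=v_n$ and, inductively,
\[
\eta_j:=\sup\left\{\|V\|_{H^1}\ :\ \exists\,(x_n)\subset\R^d\text{ and a subsequence along which }v_n^{j-1}(\cdot+x_n)\rightharpoonup V\text{ in }H^1\right\};
\]
if $\eta_j=0$ set $V^j:=0$, otherwise pick $V^j$ and $(x^j_n)$ with $\|V^j\|_{H^1}\geq\tfrac12\eta_j$ and $v_n^{j-1}(\cdot+x^j_n)\rightharpoonup V^j$, and put $v_n^j:=v_n^{j-1}-V^j(\cdot-x^j_n)$, extracting a single diagonal subsequence at the end. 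By construction $v_n^l(\cdot+x^j_n)\rightharpoonup0$ in $H^1$ for every $j\leq l$; since weak convergence is stable under translation by a bounded sequence, a convergent subsequence of $x^j_n-x^k_n$ with $j\neq k$ would force $V^k=0$, which gives \eqref{pairwise orthogonality}. Expanding squares and using these weak limits yields \eqref{L2 norm expansion} and the plain Pythagorean identity $\|v_n\|^2_{\dot{H}^1}=\sum_{j=1}^l\|V^j\|^2_{\dot{H}^1}+\|v_n^l\|^2_{\dot{H}^1}+o_n(1)$; in particular $\sum_j\|V^j\|^2_{H^1}<\infty$, so $\|V^j\|_{H^1}\to0$ and hence $\eta_j\to0$. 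Finally an $L^q$-compactness lemma (a bounded $H^1$ sequence with small $\eta$-parameter has small $L^q$ norm for $2<q<\tfrac{2d}{d-2}$, see \cite{HmidiKeraani}), applied to $(v_n^l)_n$, yields \eqref{profile error}.

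It then remains to prove \eqref{H1c norm expansion}. Since $\|f\|^2_{\dot{H}^1_c}=\|f\|^2_{\dot{H}^1}-c\int|x|^{-2}|f|^2\,dx$ and $\|V^j(\cdot-x^j_n)\|^2_{\dot{H}^1}=\|V^j\|^2_{\dot{H}^1}$, the already-known $\dot{H}^1$-expansion reduces \eqref{H1c norm expansion} to
\[
\int|x|^{-2}|v_n|^2\,dx=\sum_{j=1}^l\int|x|^{-2}|V^j(x-x^j_n)|^2\,dx+\int|x|^{-2}|v_n^l|^2\,dx+o_n(1).
\]
Expanding $v_n=\sum_{j\leq l}V^j(\cdot-x^j_n)+v_n^l$ inside $|v_n|^2$, every weighted cross term must be shown to be $o_n(1)$. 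For each profile there are two regimes: either $(x^j_n)$ is bounded --- which, by \eqref{pairwise orthogonality}, can happen for at most one index --- or $|x^j_n|\to\infty$. In the escaping regime, a cutoff $V^j=\chi_R V^j+(1-\chi_R)V^j$ (with $\chi_R=1$ on $\{|x|\leq R\}$, supported in $\{|x|\leq 2R\}$), Hardy's inequality to bound $\int|x|^{-2}|(1-\chi_R)V^j(\cdot-x^j_n)|^2\leq\lambda(d)^{-1}\|(1-\chi_R)V^j\|^2_{\dot{H}^1}$ --- small uniformly in $n$ as $R\to\infty$ --- and the estimate $|x|^{-2}\lesssim|x^j_n|^{-2}$ on the support of $\chi_R(\cdot-x^j_n)$ once $|x^j_n|>4R$, together show $\int|x|^{-2}|V^j(x-x^j_n)|^2\to0$; by Cauchy--Schwarz (the companion factor bounded via Hardy) this kills every cross term containing an escaping profile, and in particular all two-profile cross terms, since two bounded translation sequences cannot coexist. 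For the at-most-one bounded profile, say $x^{j_0}_n\to x_\infty$, the remaining term --- the cross term of $V^{j_0}(\cdot-x^{j_0}_n)$ with $v_n^l$ --- is handled by translating by $x^{j_0}_n$, using $v_n^l(\cdot+x^{j_0}_n)\rightharpoonup0$ in $H^1$, approximating $V^{j_0}$ in $L^2(|x+x_\infty|^{-2}\,dx)$ by a smooth function supported away from $-x_\infty$, and combining dominated convergence away from the moving singularity with a Hardy cutoff near it.

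The classical part is entirely routine, so the main obstacle is the weighted expansion just described. The difficulty is twofold: $|x|^{-2}$ is not locally integrable at the origin, so the Hardy inequality \eqref{sharp hardy inequality} has to be invoked repeatedly both to make the weighted quantities finite and to obtain bounds uniform in $n$; and $|x|^{-2}$ is not translation-invariant, so a profile's weighted norm $\|V^j(\cdot-x^j_n)\|_{\dot{H}^1_c}$ genuinely depends on $n$ and cannot be replaced by $\|V^j\|_{\dot{H}^1_c}$ --- which is exactly why the translates are kept inside the norm in \eqref{H1c norm expansion}. Tracking the moving singularity of the weight for the single profile that stays near the origin, via the density-plus-cutoff argument above, is the only slightly delicate point.
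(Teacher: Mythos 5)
Your proposal is correct and follows essentially the same route as the paper: the standard Hmidi--Keraani extraction for the unweighted part, and, for the new Hardy-term expansion, the same dichotomy between escaping and bounded translation sequences handled with cutoffs, the sharp Hardy inequality \eqref{sharp hardy inequality}, and weak convergence. The only (harmless) differences are that you verify all cross terms of the full $l$-term expansion at once rather than one profile per iteration step, and that the approximation of the single bounded-translation profile is cleanest if taken in $H^1$ (so that Hardy gives a bound uniform in the moving center $x^{j_0}_n$) rather than only in the fixed weighted space $L^2(|x+x_\infty|^{-2}\,dx)$.
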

	
	\begin{proof}
		For reader's convenience, we recall some details. Since $H^1$ is a Hilbert space, we denote $\Omega(v_n)$ the set of functions obtained as weak limits of sequences of the translated $v_n(\cdot + x_n)$ with $(x_n)_{n\geq 1}$ a sequence in $\R^d$. Denote
		\[
		\eta(v_n):= \sup \{ \|v\|_{L^2} + \|\nabla v\|_{L^2} : v \in \Omega(v_n)\}.
		\]
		Clearly,
		\[
		\eta(v_n) \leq \limsup_{n\rightarrow \infty} \|v_n\|_{L^2} + \|\nabla v_n\|_{L^2}.
		\]
		We shall prove that there exist a sequence $(V^j)_{j\geq 1}$ of $\Omega(v_n)$ and a family $(x_n^j)_{j\geq 1}$ of sequences in $\R^d$ such that for every $k \ne j$,
		\[
		|x_n^k - x_n^j| \rightarrow \infty, \quad \text{as } n \rightarrow \infty,
		\]
		and up to a subsequence, the sequence $(v_n)_{n\geq 1}$ can be written, for every $l\geq 1$ and every $x \in \R^d$, as
		\[
		v_n(x) = \sum_{j=1}^l V^j(x-x_n^j) + v^l_n(x), 
		\]
		with $\eta(v^l_n) \rightarrow 0$ as $l \rightarrow \infty$. Moreover, the identities $(\ref{L2 norm expansion})$ and $(\ref{H1c norm expansion})$ hold as $n \rightarrow \infty$. \newline
		\indent Indeed, if $\eta(v_n) =0$, then we can take $V^j=0$ for all $j\geq 1$. Otherwise we choose $V^1 \in \Omega(v_n)$ such that
		\[
		\|V^1\|_{L^2} + \|\nabla V^1\|_{L^2} \geq \frac{1}{2} \eta(v_n) >0. 
		\]
		By the definition of $\Omega(v_n)$, there exists  a sequence $(x^1_n)_{n\geq 1} \subset \R^d$ such that up to a subsequence,
		\[
		v_n(\cdot + x^1_n) \rightharpoonup V^1 \text{ weakly in } H^1.
		\]
		Set $v_n^1(x):= v_n(x) - V^1(x-x^1_n)$. We see that $v^1_n(\cdot + x^1_n) \rightharpoonup 0$ weakly in $H^1$ and
		thus
		\begin{align*}
		\|v_n\|^2_{L^2} &= \|V^1\|^2_{L^2} + \|v^1_n\|^2_{L^2} + o_n(1),  \\
		\|\nabla v_n\|^2_{L^2} &= \|\nabla V^1\|^2_{L^2} + \|\nabla v^1_n\|^2_{L^2} + o_n(1),
		\end{align*}
		as $n \rightarrow \infty$. We next show that 
		\[
		\int |x|^{-2} |v_n(x)|^2 dx = \int |x|^{-2} |V^1(x-x^1_n)|^2 dx + \int |x|^{-2} |v^1_n(x)|^2 dx + o_n(1),
		\]
		as $n \rightarrow \infty$. Using the fact 
		\[
		|v_n(x)|^2 = |V^1(x-x^1_n)|^2 + |v^1_n(x)|^2 + 2\re{ (V^1(x-x^1_n) \overline{v}^1_n(x))},
		\]
		it suffices to show that
		\begin{align}
		\int |x|^{-2} V^1(x-x^1_n) \overline{v}^1_n(x) dx \rightarrow 0, \label{H1c norm estimate proof}
		\end{align}
		as $n\rightarrow \infty$. Without loss of generality, we may assume that $V^1$ is continuous and compactly supported. Moreover, up to a subsequence, we assume that $|x^1_n| \rightarrow \{0,\infty\}$ as $n\rightarrow \infty$. 
		
		$\bullet$ Case 1: $|x^1_n| \rightarrow \infty$. Since $|x^1_n| \rightarrow \infty$ as $n\rightarrow \infty$, we see that $|x+ x^1_n| \geq 1$ for all $x \in \text{supp}(V^1)$ and all $n \geq n_0$ with $n_0$ large enough. Therefore, for $n\geq n_0$,
		\begin{align*}
		\left|\int |x|^{-2} V^1(x-x^1_n) \overline{v}^1_n(x) dx\right| &= \int_{\text{supp}(V^1)} |x+x^1_n|^{-2} |V^1(x)| |v^1_n(x+x^1_n)| dx \\
		&\leq \int |V^1(x)| |v^1_n(x+ x^1_n)| dx.
		\end{align*}
		Since $v^1_n(\cdot+ x^1_n) \rightharpoonup 0$ in $H^1$ as $n\rightarrow \infty$, the last term tends to zero as $n\rightarrow \infty$.
		
		$\bullet$ Case 2: $|x^1_n| \rightarrow 0$. Let $\eps>0$. For $\eta>0$ small to be chosen later, we split
		\begin{align}
		\int_{\text{supp}(V^1)} |x+x^1_n|^{-2} |V^1(x)| |v^1_n(x+x^1_n)| dx &= \int_{B(0,\eta)} |x+x^1_n|^{-2} |V^1(x)| |v^1_n(x+x^1_n)| dx \label{spliting}\\
		&\mathrel{\phantom{=}}+ \int_{\text{supp}(V^1)\backslash B(0,\eta)} |x+x^1_n|^{-2} |V^1(x)| |v^1_n(x+x^1_n)| dx. \nonumber
		\end{align}
		Since $|x^1_n| \rightarrow 0$, we see that for all $n\geq n_1$ with $n_1$ large enough, $|x+x_n^1| \geq \eta/2$ for all $x \in \text{supp}(V^1) \backslash B(0,\eta)$. Thus
		\begin{align*}
		\int_{\text{supp}(V^1)\backslash B(0,\eta)} |x+x^1_n|^{-2} |V^1(x)| |v^1_n(x+x^1_n)| dx \lesssim \eta^{-2} \int |V^1(x)| |v^1_n(x+x^1_n)| dx. 
		\end{align*}
		We next learn from the fact $v^1_n(\cdot+x^1_n) \rightharpoonup 0$ in $H^1$ as $n\rightarrow \infty$ that for $n\geq n_1$ (increasing $n_1$ if necessary),
		\begin{align}
		\int_{\text{supp}(V^1)\backslash B(0,\eta)} |x+x^1_n|^{-2} |V^1(x)| |v^1_n(x+x^1_n)| dx < \frac{\eps}{2}. \label{term 2}
		\end{align}
		We next use the Cauchy-Schwarz inequality, Hardy's inequality $(\ref{sharp hardy inequality})$ and the fact $(v_n^1)_{n\geq 1}$ is bounded in $H^1$ to get
		\begin{align}
		\int_{B(0,\eta)} |x+x^1_n|^{-2} |V^1(x)| |v^1_n(x+x^1_n)| dx &\leq \Big( \int_{B(0,\eta)} |x+x^1_n|^{-2} |V^1(x)|^2 dx \Big)^{1/2} \nonumber \\
		&\mathrel{\phantom{\leq \Big( \int_{B(0,\eta)}}} \times \Big( \int |x+x^1_n|^{-2} |v^1_n(x+x^1_n)|^2 dx \Big)^{1/2}  \nonumber \\
		&\lesssim \Big( \int_{B(0,\eta)} |x+x^1_n|^{-2} |V^1(x)|^2 dx \Big)^{1/2}  \|\nabla v^1_n\|_{L^2} \nonumber \\
		&\lesssim \Big( \int_{B(0,\eta)} |x+x^1_n|^{-2} |V^1(x)|^2 dx \Big)^{1/2}. \label{spliting term 1}
		\end{align}
		Since $|V^1(x)|^2$ is continuous on the compact set $\overline{B}(0,3\eta)$, hence it is uniformly continuous on $\overline{B}(0,3\eta)$. Thus, there exists $\delta>0$ such that for all $x,y \in \overline{B}(0,3\eta)$ satisfying $|x-y| <\delta$, we have
		\[
		||V^1(x)|^2- |V^1(y)|^2| <\frac{\eps^2}{8K(\eta)},
		\]
		where 
		\[
		K(\eta):= \int_{B(0,2\eta)} |x|^{-2} dx = \frac{(2\eta)^{d-2}}{d-2} |\mathbb{S}^{d-1}|.
		\]
		Note that we can take $\delta \in (0, \eta)$. Since $|x^1_n| \rightarrow 0$, we have for $n\geq n_2$ with $n_2$ large enough that
		\[
		|x^1_n| <\delta<\eta.
		\]
		This implies that for all $x \in B(0,2\eta)$ and all $n\geq n_2$,
		\begin{align}
		||V^1(x-x^1_n)|^2- |V^1(x)|^2| <\frac{\eps^2}{8K(\eta)}. \label{estimate V}
		\end{align}
		Since $B(x^1_n,\eta) \subset B(0,2\eta)$ for all $n\geq n_2$, we use $(\ref{estimate V})$ to get
		\begin{align*}
		\int_{B(0,\eta)} |x+x^1_n|^{-2} |V^1(x)|^2 dx &=\int_{B(x^1_n,\eta)} |x|^{-2} |V^1(x-x^1_n)|^2 dx \\
		&\leq \int_{B(0,2\eta)} |x|^{-2} |V^1(x)|^2 dx + \int_{B(0,2\eta)} |x|^{-2} \frac{\eps^2}{8 K(\eta)} dx \\
		&= \int_{B(0,2\eta)} |x|^{-2} |V^1(x)|^2 dx + \frac{\eps^2}{8}. 
		\end{align*}
		Using Hardy's inequality $(\ref{sharp hardy inequality})$ with $V^1\in H^1$, the dominated convergence allows to choose $\eta>0$ small enough so that
		\[
		\int_{B(0,2\eta)} |x|^{-2} |V^1(x)|^2 dx <\frac{\eps^2}{8}.
		\]
		We thus obtain
		\[
		\int_{B(0,\eta)} |x+x^1_n|^{-2} |V^1(x)|^2 dx <\frac{\eps^2}{4}, 
		\]
		which together with $(\ref{spliting term 1})$ yield for $n\geq n_2$,
		\begin{align}
		\int_{B(0,\eta)} |x+x^1_n|^{-2} |V^1(x)| |v^1_n(x+x^1_n)| dx  <\frac{\eps}{2}. \label{term 1}
		\end{align}
		Combining $(\ref{spliting})$, $(\ref{term 2})$ and $(\ref{term 1})$, we have for $n \geq \max\{n_1,n_2\}$,
		\[
		\int_{\text{supp}(V^1)} |x+x^1_n|^{-2} |V^1(x)| |v^1_n(x+x^1_n)| dx <\eps.
		\]
		Therefore, $(\ref{H1c norm estimate proof})$ is proved in both cases.
		
		We now replace $(v_n)_{n\geq 1}$ by $(v^1_n)_{n\geq 1}$ and repeat the same process. If $\eta(v^1_n) =0$, then we choose $V^j=0$ for all $j \geq 2$. Otherwise there exist $V^2 \in \Omega(v^1_n)$ and a sequence $(x^2_n)_{n\geq 1} \subset \R^d$ such that
		\[
		\|V^2\|_{L^2} + \|\nabla V^2\|_{L^2} \geq \frac{1}{2} \eta(v^1_n)>0,
		\]
		and
		\[
		v^1_n(\cdot+x^2_n) \rightharpoonup V^2 \text{ weakly in } H^1.
		\]
		Set $v^2_n(x) := v^1_n(x) - V^2(x-x^2_n)$. We thus have
		$v^2_n(\cdot +x^2_n) \rightharpoonup 0$ weakly in $H^1$ and 
		\begin{align*}
		\|v^1_n\|^2_{L^2} & = \|V^2\|^2_{L^2} + \|v^2_n\|^2_{L^2} + o_n(1), \\
		\|v^1_n\|^2_{\dot{H}^1_c} &= \| V^2\|^2_{\dot{H}^1_c} + \|v^2_n\|^2_{\dot{H}^1_c} + o_n(1),
		\end{align*}
		as $n \rightarrow \infty$. We claim that 
		\[
		|x^1_n - x^2_n| \rightarrow \infty, \quad \text{as } n \rightarrow \infty.
		\]
		In fact, if it is not true, then up to a subsequence, $x^1_n - x^2_n \rightarrow x_0$ as $n \rightarrow \infty$ for some $x_0 \in \R^d$. Since 
		\[
		v^1_n(x + x^2_n) = v^1_n(x +(x^2_n -x^1_n) + x^1_n),
		\]
		and $v^1_n (\cdot + x^1_n)$ converges weakly to $0$, we see that $V^2=0$. This implies that $\eta(v^1_n)=0$ and it is a contradiction. An argument of iteration and orthogonal extraction allows us to construct the family $(x^j_n)_{j\geq 1}$ of sequences in $\R^d$ and the sequence $(V^j)_{j\geq 1}$ of $H^1$ functions satisfying the claim above. Furthermore, the convergence of the series $\sum_{j\geq 1}^\infty \|V^j\|^2_{L^2} + \|\nabla V^j\|^2_{L^2}$ implies that 
		\[
		\|V^j\|^2_{L^2} + \|\nabla V^j\|^2_{L^2} \rightarrow 0, \quad \text{as } j \rightarrow \infty.
		\]
		By construction, we have
		\[
		\eta(v^j_n) \leq 2 \left(\|V^{j+1}\|_{L^2} + \|\nabla V^{j+1}\|_{L^2}\right),
		\]
		which proves that $\eta(v^j_n) \rightarrow 0$ as $j \rightarrow \infty$. The proof of $\ref{proposition profile decomposition}$ follows by the same lines as in \cite[Proposition 2.3]{HmidiKeraani}. We thus omit the details.
	\end{proof}
	
	\subsection{Compactness lemma}
	In this subsection, we recall a compactness lemma related to the nonlinear Schr\"odinger equation with inverse-square potential. 
	\begin{lemma}[Compactness lemma] \label{lemma compactness lemma}
		Let $d\geq 3$, $c \ne 0$ and $c <\lambda(d)$. Let $(v_n)_{n\geq 1}$ be a bounded sequence in $H^1$ such that
		\begin{align}
		\limsup_{n \rightarrow \infty} \|v_n\|_{\dot{H}^1_c} \leq M, \quad \limsup_{n\rightarrow \infty} \|v_n\|_{L^{\frac{4}{d}+2}} \geq m. \label{compactness lemma condition}
		\end{align}
		Then there exists $(x_n)_{n\geq 1}$ in $\R^d$ such that up to a subsequence, 
		$
		v_n(\cdot +x_n) \rightharpoonup V \text{ weakly in } H^1
		$
		for some $V \in H^1$ satisfying
		\begin{align*}
		\|V\|^{\frac{4}{d}}_{L^2} \geq \frac{d}{d+2} \frac{m^{\frac{4}{d}+2} }{M^2} \|Q_{\overline{c}}\|^{\frac{4}{d}}_{L^2}, 
		\end{align*}
		where $\overline{c} = \max \{c,0\}$ and $Q_c$ is given in Theorem $\ref{theorem sharp gagliardo-nirenberg}$. Moreover, in the case $c<0$, if we assume in addition $(v_n)_{n\geq 1}$ are radially symmetric, then up to a subsequence
		$
		v_n \rightharpoonup V \text{ weakly in } H^1
		$
		for some $V \in H^1_{\emph{rad}}$ satisfying
		\begin{align*}
		\|V\|^{\frac{4}{d}}_{L^2} \geq \frac{d}{d+2} \frac{m^{\frac{4}{d}+2} }{M^2} \|Q_{c, \emph{rad}}\|^{\frac{4}{d}}_{L^2}, 
		\end{align*}
		where $Q_{c,\emph{rad}}$ is also given in Theorem $\ref{theorem sharp gagliardo-nirenberg}$.
	\end{lemma}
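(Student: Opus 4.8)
The plan is to apply the profile decomposition of Proposition \ref{proposition profile decomposition} to the bounded sequence $(v_n)_{n\geq 1}$ and extract a single profile which carries enough $L^2$-mass. Write $v_n(x) = \sum_{j=1}^l V^j(x-x^j_n) + v^l_n(x)$ with the properties listed there. First I would use the Gagliardo-Nirenberg inequality \eqref{sharp gagliardo-nirenberg} together with \eqref{sharp constant gagliardo-nirenberg} and the asymptotic orthogonality of the $L^2$ and $\dot H^1_c$ norms \eqref{L2 norm expansion}--\eqref{H1c norm expansion} to bound $\limsup_n \|v_n\|_{L^{\frac{4}{d}+2}}^{\frac{4}{d}+2}$ from above. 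The idea is the standard "superadditivity of the $\frac{4}{d}+2$ power versus additivity of the $L^2$ and $\dot H^1_c$ pieces" trick: since $\|v^l_n\|_{L^{\frac4d+2}} \to 0$ as $l\to\infty$ (interpolating \eqref{profile error} between $L^2$ and $L^{\frac{2d}{d-2}}$), one gets, after letting $l\to\infty$,
\[
m^{\frac4d+2} \leq \limsup_n \|v_n\|_{L^{\frac4d+2}}^{\frac4d+2} \leq \sum_{j\geq 1} C_{\mathrm{GN}}(c)\, \|V^j\|_{L^2}^{\frac4d} \|V^j\|_{\dot H^1_c}^2 \leq C_{\mathrm{GN}}(c) \Big(\sup_j \|V^j\|_{L^2}^{\frac4d}\Big) \sum_{j\geq 1}\|V^j\|_{\dot H^1_c}^2.
\]
Since $\sum_j \|V^j\|_{\dot H^1_c}^2 \leq \limsup_n \|v_n\|_{\dot H^1_c}^2 \leq M^2$ by \eqref{H1c norm expansion} and positivity, this yields $\sup_j \|V^j\|_{L^2}^{\frac4d} \geq m^{\frac4d+2}/(C_{\mathrm{GN}}(c) M^2)$. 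The supremum over $j$ is attained (the $\|V^j\|_{L^2}$ go to $0$), say at $j=j_0$, and relabeling the translations $x_n := x^{j_0}_n$ we have $v_n(\cdot + x_n) \rightharpoonup V := V^{j_0}$ weakly in $H^1$, with $\|V\|_{L^2}^{\frac4d} \geq m^{\frac4d+2}/(C_{\mathrm{GN}}(c)M^2)$. Plugging in $C_{\mathrm{GN}}(c) = \frac{d+2}{d}\|Q_{\overline c}\|_{L^2}^{-\frac4d}$ from \eqref{sharp constant gagliardo-nirenberg} gives exactly the claimed lower bound $\|V\|_{L^2}^{\frac4d} \geq \frac{d}{d+2}\frac{m^{\frac4d+2}}{M^2}\|Q_{\overline c}\|_{L^2}^{\frac4d}$.

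For the radial case $c<0$, I would run the same argument but replace \eqref{sharp gagliardo-nirenberg} with the sharp radial inequality \eqref{sharp radial gagliardo-nirenberg} and its sharp constant \eqref{sharp constant gagliardo-nirenberg radial}. The point is that the profile decomposition can be taken with all $x^j_n = 0$ when the $v_n$ are radial: indeed a radial sequence weakly converges to a radial limit, and any nonzero translation $x^j_n$ would force $|x^j_n - x^k_n|\to\infty$ while by radial symmetry one could equally center at $0$, so only the $j$ with $x^j_n \equiv 0$ (after the weak-limit extraction) contribute a nonzero profile — more carefully, one checks that in the iterative construction each $V^j$ obtained from a translated radial sequence is itself forced to be radial and can be captured with $x^j_n = 0$, so that $v_n \rightharpoonup V$ with $V \in H^1_{\mathrm{rad}}$. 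Then the same chain of inequalities, now with $C_{\mathrm{GN}}(c,\mathrm{rad})$, produces $\|V\|_{L^2}^{\frac4d} \geq \frac{d}{d+2}\frac{m^{\frac4d+2}}{M^2}\|Q_{c,\mathrm{rad}}\|_{L^2}^{\frac4d}$.

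The main obstacle I anticipate is justifying cleanly the superadditivity/subadditivity step — that $\limsup_n \|v_n\|_{L^{\frac4d+2}}^{\frac4d+2} \leq \sum_j C_{\mathrm{GN}}(c)\|V^j\|_{L^2}^{\frac4d}\|V^j\|_{\dot H^1_c}^2$. This requires the asymptotic decoupling of the $L^{\frac4d+2}$ norm of a finite sum of profiles, $\|\sum_{j=1}^l V^j(\cdot - x^j_n)\|_{L^{\frac4d+2}}^{\frac4d+2} = \sum_{j=1}^l \|V^j\|_{L^{\frac4d+2}}^{\frac4d+2} + o_n(1)$, which follows from the pairwise orthogonality \eqref{pairwise orthogonality} by a standard argument (splitting into regions near each translate), combined with controlling the remainder $v^l_n$ via \eqref{profile error} and the triangle inequality, and finally letting $l\to\infty$. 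The secondary technical point is the radial reduction of the profile decomposition, which is not stated in Proposition \ref{proposition profile decomposition} as given and must be argued separately; I would handle it by revisiting the proof of the proposition above and observing that radiality is preserved at each step of the iteration. Both points are routine but should be spelled out.
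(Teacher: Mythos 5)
For the non-radial case your route coincides with the paper's: the paper simply defers to \cite[Lemma 5]{Bensouilah}, which is exactly the profile-decomposition plus sharp Gagliardo--Nirenberg argument you outline. There is, however, one step in your chain that needs repair, and it is precisely the point where this setting differs from the classical Hmidi--Keraani argument: the $\dot H^1_c$ norm is \emph{not} translation invariant, and the expansion \eqref{H1c norm expansion} decouples the norms of the translated profiles $\|V^j(\cdot-x_n^j)\|_{\dot H^1_c}$, not $\|V^j\|_{\dot H^1_c}$. Your display uses the untranslated norms, and there is no general comparison between $\|V^j\|_{\dot H^1_c}$ and $\|V^j(\cdot-x_n^j)\|_{\dot H^1_c}$ (the inequality even changes direction with the sign of $c$). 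The fix is to apply \eqref{sharp gagliardo-nirenberg} to $V^j(\cdot-x_n^j)$ rather than to $V^j$: the $L^2$ and $L^{\frac{4}{d}+2}$ norms are translation invariant, so one gets $\|V^j\|^{\frac{4}{d}+2}_{L^{\frac{4}{d}+2}}\le C_{\mathrm{GN}}(c)\,\|V^j\|^{\frac{4}{d}}_{L^2}\,\|V^j(\cdot-x_n^j)\|^2_{\dot H^1_c}$, and the translated norms are exactly the quantities that sum to at most $M^2$ by \eqref{H1c norm expansion}. With that one-line correction the argument closes as you describe.

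For the radial case you take a genuinely different, and longer, route than the paper. The paper does not use the profile decomposition at all there: it invokes the compact embedding $H^1_{\mathrm{rad}}\hookrightarrow L^{\frac{4}{d}+2}$ to obtain, up to a subsequence, $v_n\rightharpoonup V$ weakly in $H^1$ and strongly in $L^{\frac{4}{d}+2}$, so that $m\le\|V\|_{L^{\frac{4}{d}+2}}$ immediately, and then a single application of \eqref{sharp radial gagliardo-nirenberg} with \eqref{sharp constant gagliardo-nirenberg radial} and weak lower semicontinuity of $\|\cdot\|_{\dot H^1_c}$ finishes. Your proposed ``radial profile decomposition with all $x_n^j=0$'' is the same fact in disguise: once every translation is zero, the orthogonality \eqref{pairwise orthogonality} forbids more than one nonzero profile, and ruling out profiles escaping to spatial infinity requires exactly the Strauss radial decay underlying the compact embedding. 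Note also that your stated justification is off on one point: a weak limit of $v_n(\cdot+x_n^j)$ with $|x_n^j|\to\infty$ is not ``forced to be radial''; it is forced to be zero. Your plan can be made to work, but the paper's direct use of compactness is shorter and avoids re-proving a radial version of Proposition \ref{proposition profile decomposition}.
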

	\begin{proof}
		In the case $c<\lambda(d)$ and $v_n$ non-radial, the proof is given in \cite[Lemma 5]{Bensouilah} using the profile decomposition, the sharp Gagliardo-Nirenberg inequality $(\ref{sharp gagliardo-nirenberg})$ and $(\ref{sharp constant gagliardo-nirenberg})$. 
		
		Let us now consider the case $c<0$ and $(v_n)_{n\geq 1}$ a bouded sequence in $H^1_{\text{rad}}$ satisfying $(\ref{compactness lemma condition})$. Thanks to the fact
		\[
		H^1_{\text{rad}} \hookrightarrow L^{\frac{4}{d}+2} \text{ compactly},
		\]
		we see that there exists $V \in H^1_{\text{rad}}$ such that up to a subsequence, $v_n \rightharpoonup V$ weakly in $H^1$ as well as strongly in $L^{\frac{4}{d}+2}$. In particular, we have from the second condition in $(\ref{compactness lemma condition})$ that
		$m \leq \|V\|_{L^{\frac{4}{d}+2}}$. By the sharp radial Gagliardo-Nirenberg inequality and $(\ref{sharp constant gagliardo-nirenberg radial})$, we have
		\[
		m^{\frac{4}{d}+2} \leq \|V\|^{\frac{4}{d}+2}_{L^{\frac{4}{d}+2}} \leq \frac{d+2}{d} \frac{1}{\|Q_{c,\text{rad}}\|_{L^2}^{\frac{4}{d}} } \|V\|^{\frac{4}{d}}_{L^2} \|V\|^2_{\dot{H}^1_c}.
		\]
		By the lower semi continuity of Hardy's functional, the first condition in $(\ref{compactness lemma condition})$ implies
		\[
		\|V\|_{\dot{H}^1_c}  \leq \limsup_{n\rightarrow \infty} \|v_n\|_{\dot{H}^1_c} \leq M.
		\]
		We thus obtain
		\[
		\|V\|_{L^2}^{\frac{4}{d}} \geq \frac{d}{d+2} \frac{m^{\frac{4}{d}+2} }{M^2} \|Q_{c, \text{rad}}\|^{\frac{4}{d}}_{L^2}.
		\]
		The proof is complete.
	\end{proof}
	
	\section{Mass concentration} \label{section mass concentration}
	\setcounter{equation}{0} 
	In this short section, we give the proof of the mass concentration given in Theorem $\ref{theorem mass concentration}$. 
	
	\noindent \textit{Proof of Theorem $\ref{theorem mass concentration}$.}
	The proof is similar to the one of \cite[Theorem 1]{Bensouilah}. For the sake of completeness, we recall some details. Let $(t_n)_{n\geq 1}$ be a time sequence such that $t_n \uparrow T$ as $n\rightarrow \infty$. Set 
	\[
	\lambda_n := \frac{\|Q_{\overline{c}}\|_{\dot{H}^1_c}}{\|u(t_n)\|_{\dot{H}^1_c}}, \quad v_n(x) := \lambda_n^{\frac{d}{2}} u(t_n, \lambda_n x). 
	\]
	By the local well-posedness theory given in Theorem $\ref{theorem local well-posedness}$ and the equivalence between $\dot{H}^1_c$ and $\dot{H}^1$, we see that $\lambda_n \rightarrow 0$ as $n\rightarrow \infty$. Moreover, a direct computation combined with the conservation of mass and energy show
	\[
	\|v_n\|_{L^2} = \|u(t_n)\|_{L^2} = \|u_0\|_{L^2}, \quad \|v_n\|_{\dot{H}^1_c} = \lambda_n \|u(t_n)\|_{\dot{H}^1_c} = \|Q_{\overline{c}}\|_{\dot{H}^1_c},
	\]
	and
	\[
	E(v_n) = \lambda_n^2 E(u(t_n)) = \lambda_n^2 E(u_0) \rightarrow 0,
	\]
	as $n\rightarrow \infty$. In particular,
	\[
	\|v_n\|^{\frac{4}{d}+2}_{L^{\frac{4}{d}+2}} \rightarrow \frac{d+2}{d} \|Q_{\overline{c}}\|^2_{\dot{H}^1_c},
	\]
	as $n\rightarrow \infty$. This implies in particular that $(v_n)_{n\geq 1}$ satisfies conditions of Lemma $\ref{lemma compactness lemma}$ with 
	\[
	m^{\frac{4}{d}+2} = \frac{d+2}{d} \|Q_{\overline{c}}\|^2_{\dot{H}^1_c}, \quad M^2 = \|Q_{\overline{c}}\|^2_{\dot{H}^1_c}.
	\]
	Therefore, there exist a sequence $(x_n)_{n\geq 1}$ in $\R^d$  and $V\in H^1$ such that up to a subsequence
	\[
	v_n(\cdot+x_n) = \lambda_n^{\frac{d}{2}} u(t_n, \lambda_n \cdot + x_n) \rightharpoonup V \text{ weakly in } H^1,
	\]
	as $n\rightarrow \infty$ with $\|V\|_{L^2} \geq \|Q_{\overline{c}}\|_{L^2}$. This implies for every $R>0$, 
	\[
	\liminf_{n\rightarrow \infty} \int_{|x| \leq R} \lambda_n^d |u(t_n, \lambda_n x + x_n)|^2 dx \geq \int_{|x| \leq R} |V(x)|^2 dx,
	\]
	hence
	\[
	\liminf_{n\rightarrow \infty} \int_{|x-x_n| \leq R\lambda_n} |u(t_n, x)|^2 dx \geq \int_{|x| \leq R} |V(x)|^2 dx.
	\]
	Since 
	\[
	a(t_n) \|\nabla u(t_n)\|_{L^2} = \frac{a(t_n)}{\lambda_n} \frac{\|\nabla u(t_n)\|_{L^2}}{\|u(t_n)\|_{\dot{H}^1_c}} \|Q_{\overline{c}}\|_{\dot{H}^1_c},
	\]
	the equivalence $\|\nabla u(t_n)\|_{L^2} \sim \|u(t_n)\|_{\dot{H}^1_c}$ and the condition $(\ref{mass concentration condition})$ yield $\frac{a(t_n)}{\lambda_n} \rightarrow \infty$ as $n\rightarrow \infty$. We thus get for every $R>0$,
	\[
	\liminf_{n\rightarrow \infty} \sup_{y\in \R^d} \int_{|x-y| \leq a(t_n)} |u(t_n,x)|^2 dx \geq \int_{|x| \leq R} |V(x)|^2 dx,
	\]
	which means that
	\[
	\liminf_{n\rightarrow \infty} \sup_{y\in \R^d} \int_{|x-y|\leq a(t_n)} |u(t_n,x)|^2 dx \geq \int |V(x)|^2 dx \geq \int |Q_{\overline{c}}(x)|^2 dx.
	\]
	Since the sequence $(t_n)_{n\geq 1}$ is arbitrary, we infer that
	\[
	\liminf_{t\uparrow T} \sup_{y\in \R^d} \int_{|x-y| \leq a(t)} |u(t,x)|^2 dx \geq \int |Q_{\overline{c}}(x)|^2 dx.
	\]
	Moreover, since for every $t\in (0,T)$, the function $u \mapsto \int_{|x-y| \leq a(t)} |u(t,x)|^2 dx$ is continuous and goes to zero at inifinity, there exists $x(t) \in \R^d$ such that 
	\[
	\sup_{y\in \R^d} \int_{|x-y| \leq a(t)} |u(t,x)|^2 dx = \int_{|x-x(t)| \leq a(t)} |u(t,x)|^2 dx.
	\]
	This completes the first part of Theorem $\ref{theorem mass concentration}$. 
	
	We now consider the case $c<0$ and assume $u_0 \in H^1_{\text{rad}}$. It is well-known that the corresponding solution $u(t)$ to $(\ref{NLS inverse square})$ with initial data $u_0$ is also in $H^1_{\text{rad}}$ for any $t$ in the existence time. Let $(t_n)_{n\geq 1}$ be such that $t_n \uparrow T$ as $n\rightarrow \infty$. Denote
	\[
	\rho_n := \frac{\|Q_{c, \text{rad}}\|_{\dot{H}^1_c}}{\|u(t_n)\|_{\dot{H}^1_c}}, \quad v_n(x) := \rho_n^{\frac{d}{2}} u(t_n, \rho_n x). 
	\]
	As above, the blow-up alternative implies $\rho_n \rightarrow 0$ as $n\rightarrow \infty$. We also have
	\[
	\|v_n\|_{L^2} = \|u_0\|_{L^2}, \quad \|v_n\|_{\dot{H}^1_c} = \rho_n \|u(t_n)\|_{\dot{H}^1_c} = \|Q_{c,\text{rad}}\|_{\dot{H}^1_c},
	\]
	and 
	\[
	E(v_n) = \rho_n^2 E(u(t_n)) = \rho_n^2 E(u_0) \rightarrow 0,
	\]
	as $n\rightarrow \infty$. This implies in particular that
	\[
	\|v_n\|_{L^{\frac{4}{d}+2}}^{\frac{4}{d}+2} \rightarrow \frac{d+2}{d} \|Q_{c,\text{rad}}\|^2_{\dot{H}^1_c},
	\]
	as $n\rightarrow \infty$. We thus obtain a bounded sequence $(v_n)_{n\geq 1}$ in $H^1_{\text{rad}}$ satisfying conditions of Lemma $\ref{lemma compactness lemma}$ with
	\[
	m^{\frac{4}{d}+2} =\frac{d+2}{d} \|Q_{c,\text{rad}}\|^2_{\dot{H}^1_c}, \quad M^2 = \|Q_{c,\text{rad}}\|^2_{\dot{H}^1_c}.
	\]
	Thus, there exists $V\in H^1_{\text{rad}}$ such that 
	\[
	v_n \rightharpoonup V \text{ weakly in } H^1,
	\]
	as $n\rightarrow \infty$ with $\|V\|_{L^2} \geq \|Q_{c,\text{rad}}\|_{L^2}$. The rest of the proof follows by the same argument as in the first case. The proof is complete.
	\defendproof	
	
	\section{Characterization of finite time blow-up solutions with minimal mass} \label{section blowup profile}
	\setcounter{equation}{0} 
	In this section, we give the proof of the characterization of finite time blow-up solutions with minimal mass given in Theorem $\ref{theorem characterization minimal mass}$. Let us start with the following variational structure of ground states. 
	
	\subsection{Variational structure of ground states}
	In this subsection, we show the variational structure of ground states which is neccessary in the study of limiting profile of finite time blow-up solutions with minimal mass. To sucessfully study the variational structure of ground states, we need to define a proper notion of ground states. To do this, we follow the idea of Csobo-Genoud in \cite{CsoboGenoud}.
	\begin{definition} [Ground states] \label{definition ground states}
		\begin{itemize}
			\item In the case $0<c<\lambda(d)$, we call \textbf{ground states} the maximizers of $J_c$ (see $(\ref{define weinstein functional})$) which are positive radial solutions to the elliptic equation $(\ref{elliptic equation})$. The set of ground states is denoted by $\mathcal{G}$. 
			\item In the case $c<0$, we call \textbf{radial ground states} the maximizers of $J_c$ which are positive radial solutions to the elliptic equation $(\ref{radial elliptic equation})$. The set of radial ground states is denoted by $\mathcal{G}_{\text{rad}}$. 
		\end{itemize} 
	\end{definition}
	\begin{remark}
		\begin{itemize}
			\item The reason for introducing the above notion of ground states is that the uniqueness (up to symmetries) of positive radial solutions to $(\ref{elliptic equation})$ and $(\ref{radial elliptic equation})$ are not yet known. 
			
			\item By definition, the function $Q_c$ (resp. $Q_{c,\text{rad}}$) given in Theorem $\ref{theorem sharp gagliardo-nirenberg}$ belongs to $\mathcal{G}$ (resp. $\mathcal{G}_{\text{rad}}$). 
			
			\item It follows from the proof of Theorem $\ref{theorem sharp gagliardo-nirenberg}$ and $(\ref{sharp constant gagliardo-nirenberg})$ that all ground states have the same mass. Hence, there exists $M_{\text{gs}}>0$ such that $\|Q\|_{L^2} = M_{\text{gs}}$ for all $Q \in \mathcal{G}$. The constant $M_{\text{gs}}$ is called \textbf{minimal mass}.
			
			\item Similarly, it follows from the proof of Theorem $\ref{theorem sharp gagliardo-nirenberg}$ and $(\ref{sharp constant gagliardo-nirenberg radial})$ that all radial ground states have the same mass. Hence there exists $M_{\text{gs,rad}}>0$ such that $\|Q_{\text{rad}}\|_{L^2} = M_{\text{gs,rad}}$ for all $Q_{\text{rad}} \in \mathcal{G}_{\text{rad}}$. The constant $M_{\text{gs,rad}}$ is called \textbf{radial minimal mass}. 	
		\end{itemize}
	\end{remark}
	
	Using Definition $\ref{definition ground states}$, we have the following sharp Gagliardo-Nirenberg inequality: for $0<c<\lambda(d)$,
	\begin{align}
	\|u\|^{\frac{4}{d}+2}_{L^{\frac{4}{d}+2}} \leq C_{\text{GN}}(c) \|u\|^{\frac{4}{d}}_{L^2} \|u\|^2_{\dot{H}^1_c}, \label{sharp gagliardo-nirenberg modified}
	\end{align}
	for any $u \in H^1\backslash \{0\}$, where
	\[
	C_{\text{GN}}(c) = \frac{d+2}{d} \frac{1}{M^{\frac{4}{d}}_{\text{gs}}},
	\]
	and also the following sharp radial Gagliardo-Nirenberg inequality: for $c<0$,
	\begin{align}
	\|u\|^{\frac{4}{d}+2}_{L^{\frac{4}{d}+2}} \leq C_{\text{GN}}(c,\text{rad}) \|u\|^{\frac{4}{d}}_{L^2} \|u\|^2_{\dot{H}^1_c}, \label{sharp radial gagliardo-nirenberg modified}
	\end{align}
	for any $u \in H^1_{\text{rad}}\backslash \{0\}$, where
	\[
	C_{\text{GN}}(c, \text{rad}) = \frac{d+2}{d} \frac{1}{M^{\frac{4}{d}}_{\text{gs,rad}}}.
	\]
	
	We have the following variational structure of ground states.
	\begin{lemma} [Variational structure of ground states] \label{lemma variational structure ground states}
		\begin{itemize}
			\item Let $d\geq 3$ and $0<c<\lambda(d)$. If $v \in H^1$ satisfies
			\[
			\|v\|_{L^2} = M_{\emph{gs}}, \quad E(v) =0,
			\]
			then there exists $Q \in \mathcal{G}$ such that $v$ is of the form
			\[
			u(x) = e^{i\theta} \lambda^{\frac{d}{2}} Q(\lambda x),
			\]
			for some $\theta \in \R$ and $\lambda>0$. 
			\item Let $d\geq 3$ and $c<0$. If $v\in H^1_{\emph{rad}}$ satisfies
			\[
			\|v\|_{L^2} = M_{\emph{gs,rad}}, \quad E(v) =0,
			\]
			then there exists $Q_{\emph{rad}} \in \mathcal{G}_{\emph{rad}}$ such that $v$ is of the form
			\[
			v(x) = e^{i\vartheta} \rho^{\frac{d}{2}} Q_{\emph{rad}}(\rho x),
			\]
			for some $\vartheta \in \R$ and $\rho>0$.
		\end{itemize}
	\end{lemma}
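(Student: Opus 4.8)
The plan is to use the two constraints to force $v$ to be an optimizer of the Weinstein functional $J_c$ (see $(\ref{define weinstein functional})$), and then to identify optimizers of $J_c$, up to the scaling/phase symmetries, with the ground states of Definition $\ref{definition ground states}$. For the first step, write $B:=\|v\|^2_{\dot H^1_c}$ and $C:=\|v\|^{\frac4d+2}_{L^{\frac4d+2}}$, so that the hypothesis $E(v)=0$ is precisely the identity $B=\frac{d}{d+2}C$. Inserting $\|v\|_{L^2}=M_{\text{gs}}$ and $C_{\text{GN}}(c)=\frac{d+2}{d}M_{\text{gs}}^{-4/d}$ into the sharp Gagliardo--Nirenberg inequality $(\ref{sharp gagliardo-nirenberg modified})$ gives $C\le\frac{d+2}{d}B$; together with the previous identity this forces equality in $(\ref{sharp gagliardo-nirenberg modified})$, i.e. $J_c(v)=C_{\text{GN}}(c)$, so $v$ optimizes $J_c$ over $H^1\setminus\{0\}$. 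In the case $c<0$ the same computation with $(\ref{sharp radial gagliardo-nirenberg modified})$, $M_{\text{gs,rad}}$ and $H^1_{\text{rad}}$ shows $v$ optimizes $J_c$ over $H^1_{\text{rad}}\setminus\{0\}$.

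Next I would extract structure from the optimizer. Being a critical point of $J_c$, $v$ solves (distributionally) the Euler--Lagrange equation $\Delta v+c|x|^{-2}v-\alpha v+\beta|v|^{\frac4d}v=0$ with $\alpha=\frac{2B}{dM_{\text{gs}}^2}>0$ and $\beta=\frac{(d+2)B}{dC}$, and the value $\beta=1$ is exactly the reformulation of $E(v)=0$ obtained above. Since $0<c<\lambda(d)$, the diamagnetic inequality $\|\nabla|v|\|_{L^2}\le\|\nabla v\|_{L^2}$ and the identity $\||x|^{-1}|v|\|_{L^2}=\||x|^{-1}v\|_{L^2}$ give $J_c(|v|)\ge J_c(v)=C_{\text{GN}}(c)$, so $|v|$ is also an optimizer, and since the $L^2$ and $L^{\frac4d+2}$ norms are unchanged, $\|\nabla|v|\|_{L^2}=\|\nabla v\|_{L^2}$. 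Then, because $|x|^{-2}$ is itself symmetric decreasing, the Hardy--Littlewood (Riesz) inequality gives $\int|x|^{-2}|v|^2\le\int|x|^{-2}(|v|^{\ast})^2$, which, combined with the rearrangement inequality for gradients and the sign $c>0$, shows that the symmetric decreasing rearrangement $|v|^{\ast}$ is again an optimizer; a careful analysis of the equality cases then forces $|v|=|v|^{\ast}$, so $|v|$ is radially symmetric about the origin.

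With radial symmetry in hand, elliptic regularity on $\R^d\setminus\{0\}$ and the strong maximum principle give $|v|>0$; writing $v=e^{i\phi}|v|$ with $\phi$ smooth away from $0$, the pointwise identity $|\nabla v|^2=|\nabla|v||^2+|v|^2|\nabla\phi|^2$ together with $\|\nabla|v|\|_{L^2}=\|\nabla v\|_{L^2}$ forces $\nabla\phi\equiv0$, so $\phi\equiv\theta$ is constant and $v=e^{i\theta}|v|$, with $|v|$ a positive radial solution of $\Delta|v|+c|x|^{-2}|v|-\alpha|v|+|v|^{\frac4d+1}=0$. Finally I would set $Q(x):=\alpha^{-d/4}|v|(\alpha^{-1/2}x)$; a direct scaling computation shows $Q$ is positive and radial, solves $(\ref{elliptic equation})$, and satisfies $J_c(Q)=J_c(|v|)=C_{\text{GN}}(c)$, hence $Q\in\mathcal{G}$, while $\|Q\|_{L^2}=\|v\|_{L^2}=M_{\text{gs}}$. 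Undoing the rescaling yields $v(x)=e^{i\theta}\alpha^{d/4}Q(\alpha^{1/2}x)=e^{i\theta}\lambda^{\frac d2}Q(\lambda x)$ with $\lambda:=\alpha^{1/2}$, which is the asserted form. The case $c<0$ is treated identically inside $H^1_{\text{rad}}$, using $(\ref{radial elliptic equation})$ and $\mathcal{G}_{\text{rad}}$; there the symmetrization step is both superfluous and unavailable, since for $c<0$ the sign in front of $\int|x|^{-2}$ makes rearrangement decrease $J_c$ — this is exactly why one must restrict to radial data.

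The main obstacle is the middle step: proving that an optimizer of $J_c$ is, after a global phase rotation, a \emph{positive} and \emph{radial} function centered at the \emph{origin}. Ruling out non-radial optimizers, and radial bumps centered away from $0$, requires the equality-case analysis of the rearrangement inequalities, and it is precisely the attractive inverse-square term (with $c>0$) that singles out the origin as the only admissible center; by contrast, the positivity argument and the removal of the phase are routine once the singularity at $x=0$ is handled by first working on $\R^d\setminus\{0\}$ and invoking continuity of $H^1$-solutions of the elliptic equation.
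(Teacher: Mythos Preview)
Your argument is correct and complete in outline. Note, however, that the paper itself does not supply a proof of this lemma: it simply cites \cite[Proposition~3]{CsoboGenoud} for the case $0<c<\lambda(d)$ and declares the case $c<0$ analogous. Your route---saturate the sharp Gagliardo--Nirenberg inequality $(\ref{sharp gagliardo-nirenberg modified})$ (resp.\ $(\ref{sharp radial gagliardo-nirenberg modified})$) using the two constraints, pass to the Euler--Lagrange equation, reduce to a positive radial profile via the diamagnetic inequality and symmetric decreasing rearrangement, and finally rescale to land in $\mathcal{G}$ (resp.\ $\mathcal{G}_{\text{rad}}$)---is the standard one, and is in essence what the cited reference does. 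Two minor clarifications worth making explicit: (i) to invoke elliptic regularity and the strong maximum principle on $|v|$ you should note that $|v|$, being itself a maximizer of $J_c$ with the same $L^2$, $\dot H^1_c$ and $L^{\frac4d+2}$ norms as $v$, solves the real Euler--Lagrange equation with the \emph{same} Lagrange multipliers $\alpha,\beta$; (ii) for the symmetrization step, the strict radial monotonicity of $|x|^{-2}$ is what yields $|v|=|v|^{\ast}$ from equality in the Hardy--Littlewood inequality alone, so the more delicate Brothers--Ziemer equality case of P\'olya--Szeg\H{o} is not needed. Your observation that the rearrangement step is both unnecessary and unavailable when $c<0$ (the sign of $-c\int|x|^{-2}|v|^2$ reverses the effect of rearrangement on $\|\cdot\|_{\dot H^1_c}$) is exactly the reason the paper restricts to $H^1_{\text{rad}}$ there.
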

	\begin{proof}
		In the case $0<c<\lambda(d)$, the proof of the above result is given in \cite[Proposition 3, p.119]{CsoboGenoud}. The one for $c<0$ is similar. We thus omit the details.
	\end{proof}
	\subsection{Limiting profile of finite time minimal mass blow-up solutions}
	Using the variational structure of ground states given in Lemma $\ref{lemma variational structure ground states}$, we obtain the following limiting profile of finite time blow-up solutions with minimal mass. This limiting profile plays a same role as the one proved by Csobo-Genoud in \cite[Proposition 4, p.120]{CsoboGenoud}. With the help of this limiting profile, we show the classification of finite time blow-up solutions with minimal mass for $(\ref{NLS inverse square})$.
	\begin{theorem}[Limiting profile with minimal mass] \label{theorem limiting profile minimal mass}
		\begin{itemize}
			\item Let $d\geq 3$ and $0<c<\lambda(d)$. Let $u_0 \in H^1$ be such that $\|u_0\|_{L^2}=M_{\emph{gs}}$. Suppose that the corresponding solution $u$ to $(\ref{NLS inverse square})$ blows up at finite time $0<T<+\infty$. Then for any time sequence $(t_n)_{n\geq 1}$ satisfying $t_n \uparrow T$, there exist a subsequence still denoted by $(t_n)_{n\geq 1}$, a function $Q\in \mathcal{G}$, sequences of $\theta_n \in \R$, $\lambda_n>0, \lambda_n \rightarrow 0$ and $x_n \in \R^d$ such that 
			\begin{align}
			e^{it\theta_n} \lambda_n^{\frac{d}{2}} u(t_n, \lambda_n \cdot + x_n) \rightarrow Q \text{ strongly in } H^1, \label{limiting profile positive c}
			\end{align}
			as $n\rightarrow \infty$.
			\item Let $d\geq 3$ and $c<0$. Let $u_0 \in H^1_{\emph{rad}}$ satisfy $\|u_0\|_{L^2} = M_{\emph{gs,rad}}$. Suppose that the corresponding solution $u$ to $(\ref{NLS inverse square})$ blows up at finite time $0<T<+\infty$. Then for any time sequence $(t_n)_{n\geq 1}$ satisfying $t_n \uparrow T$, there exist a subsequence still denoted by $(t_n)_{n\geq 1}$, a function $Q_{\emph{rad}}\in \mathcal{G}_{\emph{rad}}$, sequences of $\vartheta_n \in \R$ and $\rho_n>0, \rho_n\rightarrow 0$ such that
			\begin{align}
			e^{it\vartheta_n} \rho_n^{\frac{d}{2}} u(t_n, \rho_n \cdot ) \rightarrow Q_{\emph{rad}} \text{ strongly in } H^1, \label{limiting profile negative c}
			\end{align}
			as $n\rightarrow \infty$.
		\end{itemize}
	\end{theorem}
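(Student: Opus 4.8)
The plan is to run the Hmidi--Keraani renormalization scheme of \cite{HmidiKeraani}, feeding in the sharp Gagliardo--Nirenberg inequalities of Theorem~\ref{theorem sharp gagliardo-nirenberg}, the compactness Lemma~\ref{lemma compactness lemma}, and the variational structure Lemma~\ref{lemma variational structure ground states}. I will carry out the case $0<c<\lambda(d)$ in detail; the radial case $c<0$ follows by the identical argument with the radial counterparts (and with no space translations at all, since the compact embedding $H^1_{\text{rad}}\hookrightarrow L^{\frac{4}{d}+2}$ already produces a weak limit without translating), replacing $Q_{\overline c}$, $M_{\text{gs}}$ and $(\ref{sharp gagliardo-nirenberg modified})$ by $Q_{c,\text{rad}}$, $M_{\text{gs,rad}}$ and $(\ref{sharp radial gagliardo-nirenberg modified})$.

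Given $t_n\uparrow T$, I would set $\lambda_n:=\|Q_{\overline c}\|_{\dot{H}^1_c}/\|u(t_n)\|_{\dot{H}^1_c}$ and $v_n(x):=\lambda_n^{\frac{d}{2}}u(t_n,\lambda_n x)$. The blow-up alternative in Theorem~\ref{theorem local well-posedness} and the equivalence $\|\cdot\|_{\dot{H}^1_c}\sim\|\cdot\|_{\dot{H}^1}$ give $\lambda_n\to 0$, while the scaling of the equation and the conservation of mass and energy give $\|v_n\|_{L^2}=M_{\text{gs}}$, $\|v_n\|_{\dot{H}^1_c}=\|Q_{\overline c}\|_{\dot{H}^1_c}=:M$ and $E(v_n)=\lambda_n^2E(u_0)\to 0$; writing $E(v_n)=\frac12\|v_n\|_{\dot{H}^1_c}^2-\frac{d}{2d+4}\|v_n\|_{L^{\frac{4}{d}+2}}^{\frac{4}{d}+2}$ then forces $\|v_n\|_{L^{\frac{4}{d}+2}}^{\frac{4}{d}+2}\to\frac{d+2}{d}M^2$. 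Hence $(v_n)$ satisfies the hypotheses of Lemma~\ref{lemma compactness lemma} with this $M$ and $m^{\frac{4}{d}+2}=\frac{d+2}{d}M^2$, which yields $x_n\in\R^d$ and $V\in H^1$ with $v_n(\cdot+x_n)\rightharpoonup V$ weakly in $H^1$ and $\|V\|_{L^2}^{\frac{4}{d}}\ge\frac{d}{d+2}\frac{m^{\frac{4}{d}+2}}{M^2}\|Q_{\overline c}\|_{L^2}^{\frac{4}{d}}=M_{\text{gs}}^{\frac{4}{d}}$. Since weak lower semicontinuity and $\|v_n(\cdot+x_n)\|_{L^2}=M_{\text{gs}}$ force $\|V\|_{L^2}\le M_{\text{gs}}$, we get $\|V\|_{L^2}=M_{\text{gs}}$, hence $v_n(\cdot+x_n)\to V$ strongly in $L^2$ and, interpolating with the uniform $H^1$ bound, strongly in $L^{\frac{4}{d}+2}$, so $\|V\|_{L^{\frac{4}{d}+2}}^{\frac{4}{d}+2}=\frac{d+2}{d}M^2$.

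Next I would insert $V$ into the sharp inequality $(\ref{sharp gagliardo-nirenberg modified})$, with $C_{\text{GN}}(c)=\frac{d+2}{d}M_{\text{gs}}^{-\frac{4}{d}}$: using $\|V\|_{L^2}=M_{\text{gs}}$ this reads $\frac{d+2}{d}M^2\le\frac{d+2}{d}\|V\|_{\dot{H}^1_c}^2$, i.e.\ $\|V\|_{\dot{H}^1_c}\ge M$. Together with the reverse bound $\|V\|_{\dot{H}^1_c}\le M$ (discussed below) this gives $\|V\|_{\dot{H}^1_c}=M=\lim_n\|v_n(\cdot+x_n)\|_{\dot{H}^1_c}$, so convergence of the norms together with weak convergence in the Hilbert space $\dot{H}^1_c$ (equivalently $\dot{H}^1$) upgrades $v_n(\cdot+x_n)\to V$ to strong convergence in $\dot{H}^1$, hence in $H^1$. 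From $\|V\|_{\dot{H}^1_c}^2=M^2$ and $\|V\|_{L^{\frac{4}{d}+2}}^{\frac{4}{d}+2}=\frac{d+2}{d}M^2$ I read off $E(V)=\frac12 M^2-\frac{d}{2d+4}\cdot\frac{d+2}{d}M^2=0$, while $\|V\|_{L^2}=M_{\text{gs}}$; hence Lemma~\ref{lemma variational structure ground states} applies and produces $Q\in\mathcal G$, $\theta\in\R$ and $\mu>0$ with $V(x)=e^{i\theta}\mu^{\frac{d}{2}}Q(\mu x)$. Undoing this dilation and phase in $v_n(\cdot+x_n)\to V$ --- that is, replacing $\lambda_n$ by $\lambda_n/\mu\to 0$ and $x_n$ by $\lambda_n x_n$ and absorbing the constant phase --- turns the convergence into the statement $(\ref{limiting profile positive c})$; in the radial case $Q_{c,\text{rad}}$ is radial, so the extra dilation keeps the limit radial and one obtains $(\ref{limiting profile negative c})$.

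The main obstacle is the reverse bound $\|V\|_{\dot{H}^1_c}\le M$. In the potential-free case of \cite{HmidiKeraani} this is immediate from the translation invariance $\|v_n(\cdot+x_n)\|_{\dot{H}^1}=\|v_n\|_{\dot{H}^1}=M$, but $\|\cdot\|_{\dot{H}^1_c}$ is not translation invariant. One should instead argue, following the proof of the profile decomposition in \cite{Bensouilah}, that the Pythagorean expansion $(\ref{H1c norm expansion})$ caps the $\dot{H}^1_c$-energy carried by the profile $V$ at its location $x_n$ by $\lim_n\|v_n\|_{\dot{H}^1_c}^2=M^2$, and then pass to the limit in $\|V(\cdot-x_n)\|_{\dot{H}^1_c}$ using the analysis of the weight $|x|^{-2}$ under the translations $x_n$ carried out in the proof of Proposition~\ref{proposition profile decomposition} (the two cases $|x_n|\to\infty$ and $|x_n|$ bounded). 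Everything else reduces to the squeeze $\|V\|_{L^2}=M_{\text{gs}}$ and the sharp Gagliardo--Nirenberg inequality, and it is exactly these two ingredients --- which rest on minimality of the mass and on the precise value of $C_{\text{GN}}(c)$ --- that make the argument work.
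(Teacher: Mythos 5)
Your proposal is correct and follows essentially the same route as the paper: the renormalization $v_n=\lambda_n^{d/2}u(t_n,\lambda_n\cdot)$, Lemma~\ref{lemma compactness lemma}, the $L^2$ squeeze forcing $\|V\|_{L^2}=M_{\text{gs}}$ and strong $L^2$ (hence $L^{\frac{4}{d}+2}$) convergence, the sharp Gagliardo--Nirenberg lower bound $\|V\|_{\dot{H}^1_c}\geq \|Q_c\|_{\dot{H}^1_c}$, and finally $E(V)=0$ plus Lemma~\ref{lemma variational structure ground states}. The one point where you diverge --- and which you rightly single out as the crux, since the Hardy functional is not translation invariant --- is the matching upper bound on $\|V\|_{\dot{H}^1_c}$ and the upgrade to strong $\dot{H}^1$ convergence: you propose to pass to the limit in $\|V(\cdot-x_n)\|_{\dot{H}^1_c}$ via the Pythagorean expansion \eqref{H1c norm expansion} and a dichotomy on $|x_n|$, whereas the paper writes $r_n:=v_n-V(\cdot-x_n)$, expands the energy as $E(v_n)=E(V(\cdot-x_n))+E(r_n)+o_n(1)$, and uses that both terms are asymptotically non-negative ($E(V(\cdot-x_n))\geq 0$ by sharp Gagliardo--Nirenberg since $\|V(\cdot-x_n)\|_{L^2}=M_{\text{gs}}$, and $\liminf_n E(r_n)\geq \tfrac12\liminf_n\|r_n\|^2_{\dot{H}^1_c}\geq 0$ since $r_n\to 0$ in $L^{\frac{4}{d}+2}$) to conclude $\|r_n\|_{\dot{H}^1_c}\to 0$ along a subsequence. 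Both arguments rest on the same asymptotic orthogonality of $\int|x|^{-2}|\cdot|^2\,dx$ under the translations $x_n$ established in the proof of Proposition~\ref{proposition profile decomposition}; the paper's energy version has the advantage of never needing to identify $\lim_n\|V(\cdot-x_n)\|_{\dot{H}^1_c}$ or to decide whether $x_n$ escapes to infinity, while your version, once completed, additionally rules out $|x_n|\to\infty$ (that case would force $\|\nabla V\|_{L^2}^2\leq \|Q_c\|_{\dot{H}^1_c}^2\leq \|V\|_{\dot{H}^1_c}^2$, hence $V\equiv 0$, a contradiction) and reduces the bounded case to one with no translation after re-centering. Your sketch at this step is the only part that would need to be written out in full; everything else, including the radial case, matches the paper's argument.
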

	
	\begin{proof}
		Let us firstly consider the case $0<c<\lambda(d)$. Let $(t_n)_{n\geq 1}$ be a sequence such that $t_n \uparrow T$. Set
		\[
		\lambda_n:= \frac{\|Q_c\|_{\dot{H}^1_c}}{\|u(t_n)\|_{\dot{H}^1_c}}, \quad v_n(x) := \lambda_n^{\frac{d}{2}} u(t_n, \lambda_n x),
		\]
		where $Q_c$ is given in Theorem $\ref{theorem sharp gagliardo-nirenberg}$. By the blow-up alternative, we see that $\lambda_n \rightarrow 0$ as $n \rightarrow \infty$. Moreover,
		\begin{align}
		\|v_n\|_{L^2} = \|u(t_n)\|_{L^2} = \|u_0\|_{L^2} = M_{\text{gs}}, \label{L2 norm v_n}
		\end{align}
		and
		\begin{align}
		\|v_n\|_{\dot{H}^1_c} = \lambda_n \|u(t_n)\|_{\dot{H}^1_c} = \|Q_c\|_{\dot{H}^1_c}, \label{H1c norm v_n}
		\end{align}
		and
		\[
		E(v_n) = \lambda_n^2 E(u(t_n)) = \lambda_n^2 E(u_0) \rightarrow 0,
		\]
		as $n \rightarrow \infty$. In particular, 
		\begin{align}
		\|v_n\|^{\frac{4}{d}+2}_{L^{\frac{4}{d}+2}}  \rightarrow \frac{d+2}{d} \|Q_c\|^2_{\dot{H}^1_c}, \label{lebesgue norm v_n}
		\end{align}
		as $n \rightarrow \infty$. Thus the sequence $(v_n)_{n\geq 1}$ satisfies conditions of Lemma $\ref{lemma compactness lemma}$ with 
		\[
		M^2 = \|Q_c\|^2_{\dot{H}^1_c}, \quad m^{\frac{4}{d}+2} = \frac{d+2}{d} \|Q_c\|^2_{\dot{H}^1_c}.
		\]
		Therefore, there exist $V \in H^1$ and a sequence $(x_n)_{n\geq 1}$ in $\R^d$ such that up to a subsequence,
		\[
		v_n(\cdot+ x_n) = \lambda_n^{\frac{d}{2}} u(t_n, \lambda_n \cdot+ x_n) \rightharpoonup V \text{ weakly in } H^1, 
		\]
		as $n\rightarrow \infty$ and $\|V\|_{L^2} \geq \|Q_c\|_{L^2} = M_{\text{gs}}$. Since $v_n(\cdot+x_n) \rightharpoonup V$ weakly in $H^1$ as $n\rightarrow \infty$, the semi-continuity of weak convergence and $(\ref{L2 norm v_n})$ imply
		\[
		M_{\text{gs}} \leq \|V\|_{L^2} \leq \liminf_{n\rightarrow \infty} \|v_n\|_{L^2} = M_{\text{gs}}.
		\]
		This shows that
		\begin{align}
		\|V\|_{L^2} = \lim_{n\rightarrow \infty} \|v_n\|_{L^2} = M_{\text{gs}}. \label{L2 convergence}
		\end{align}
		Therefore, 
		$
		v_n(\cdot+x_n) \rightarrow V \text{ strongly in } L^2
		$
		as $n\rightarrow \infty$. By the sharp Gagliardo-Nirenberg inequality $(\ref{sharp gagliardo-nirenberg modified})$, we also have that $v_n(\cdot+x_n) \rightarrow V$ strongly in $L^{\frac{4}{d}+2}$ as $n\rightarrow \infty$. Indeed, 
		\begin{align*}
		\|v_n(\cdot+x_n) - V\|^{\frac{4}{d}+2}_{L^{\frac{4}{d}+2}} &\leq C_{\text{GN}}(c) \|v_n(\cdot+ x_n) - V\|^{\frac{4}{d}}_{L^2} \|v_n(\cdot+x_n)-V\|^2_{\dot{H}^1_c} \\
		&\lesssim C_{\text{GN}}(c) \left(\|Q_c\|^2_{\dot{H}^1_c} + \|V\|^2_{\dot{H}^1_c} \right) \|v_n(\cdot+x_n) -V\|^{\frac{4}{d}}_{L^2} \rightarrow 0,
		\end{align*}
		as $n\rightarrow \infty$. Here we use 
		\[	
		\|v_n(\cdot+x_n)\|_{\dot{H}^1_x} \sim \|v_n(\cdot +x_n)\|_{\dot{H}^1} = \|v_n\|_{\dot{H}^1} \sim \|v_n\|_{\dot{H}^1_c}
		\]
		in the second estimate. Moreover, using $(\ref{lebesgue norm v_n})$, $(\ref{L2 convergence})$ and the sharp Gagliardo-Nirenberg inequality $(\ref{sharp gagliardo-nirenberg modified})$, we get
		\[
		\|Q_c\|^2_{\dot{H}^1_c} = \frac{d}{d+2} \lim_{n\rightarrow \infty} \|v_n\|^{\frac{4}{d}+2}_{L^{\frac{4}{d}+2}} = \frac{d}{d+2} \|V\|^{\frac{4}{d}+2}_{L^{\frac{4}{d}+2}} \leq \left(\frac{\|V\|_{L^2}}{M_{\text{gs}}} \right)^{\frac{4}{d}} \|V\|^2_{\dot{H}^1_c} = \|V\|^2_{\dot{H}^1_c}.
		\]
		Thus the semi-continuity of weak convergence and $(\ref{H1c norm v_n})$ imply
		\[
		\|Q_c\|_{\dot{H}^1_c} \leq \|V\|_{\dot{H}^1_c} \leq \liminf_{n\rightarrow \infty} \|v_n\|_{\dot{H}^1_c} = \|Q_c\|_{\dot{H}^1_c}.
		\]
		Hence 
		\[
		\|V\|_{\dot{H}^1_c} = \lim_{n\rightarrow \infty} \|v_n\|_{\dot{H}^1_c} = \|Q_c\|_{\dot{H}^1_c}.
		\]
		We next claim that 
		\[
		v_n(\cdot+x_n) \rightarrow V \text{ strongly in } \dot{H}^1,
		\]
		as $n \rightarrow \infty$. Since
		\[
		\|V\|^{\frac{4}{d}+2}_{L^{\frac{4}{d}+2}} = \frac{d+2}{d} \|Q_c\|^2_{\dot{H}^1_c}, \quad \|V\|_{\dot{H}^1_c}= \|Q_c\|_{\dot{H}^1_c},
		\] 
		we see that $E(V)=0$. It follows that there exists $V \in H^1$ such that 
		\[
		\|V\|_{L^2} = M_{\text{gs}}, \quad E(V)=0.
		\]
		The variational structure of ground states given in Lemma $\ref{lemma variational structure ground states}$ shows that there exists $Q \in \mathcal{G}$ such that $V(x) = e^{i\theta} \lambda^{\frac{d}{2}} Q(\lambda x)$ for some $\theta \in \R$ and $\lambda>0$. Thus,
		\[
		v_n(\cdot+x_n) = \lambda_n^{\frac{d}{2}} u(t_n, \lambda_n \cdot+ x_n) \rightarrow V= e^{i\theta} \lambda^{\frac{d}{2}} Q(\lambda \cdot) \text{ strongly in } H^1,
		\]
		as $n\rightarrow \infty$. Redefining $\tilde{\lambda}_n:= \lambda_n \lambda^{-1}$,
		we obtain 
		\[
		e^{-i\theta} \tilde{\lambda}_n^{\frac{d}{2}} u(t_n, \tilde{\lambda}_n \cdot + x_n) \rightarrow Q \text{ strongly in } H^1,
		\]
		as $n \rightarrow \infty$. We now prove the claim. Since $v_n(\cdot+x_n) \rightharpoonup V$ weakly in $H^1$. Set $r_n(x):= v_n(x) - V(x-x_n)$. We see that $r_n(\cdot+x_n) \rightharpoonup 0$ weakly in $H^1$. By the same argument as in the proof of Proposition $\ref{proposition profile decomposition}$, we have
		\[
		\int |x|^{-2} |v_n(x)|^2 dx = \int |x|^{-2} |V(x-x_n)|^2 dx + \int |x|^{-2} |r_n(x)|^2 dx +o_n(1).
		\]
		In particular, we have
		\begin{align*}
		\|v_n\|^2_{\dot{H}^1_c} &= \|V(\cdot-x_n)\|^2_{\dot{H}^1_c} + \|r_n\|^2_{\dot{H}^1_c} + o_n(1), \\
		\|v_n\|^{\frac{4}{d}+2}_{L^{\frac{4}{d}+2}} &= \|V(\cdot-x_n)\|^{\frac{4}{d}+2}_{L^{\frac{4}{d}+2}} + \|r_n\|^{\frac{4}{d}+2}_{L^{\frac{4}{d}+2}} +o_n(1),
		\end{align*}
		as $n\rightarrow \infty$. Thus,
		\begin{align}
		E(v_n) = E(V(\cdot-x_n)) + E(r_n) + o_n(1), \label{energy expansion}
		\end{align}
		as $n\rightarrow \infty$. On the other hand, since $v_n(\cdot+x_n) \rightarrow V$ strongly in $L^2$, it follows that $r_n(\cdot+x_n) \rightarrow 0$ strongly in $L^2$. This implies in particular that $r_n \rightarrow 0$ strongly in $L^2$ and $r_n \rightharpoonup 0$ weakly in $H^1$. The sharp Gagliardo-Nireberg inequality $(\ref{sharp gagliardo-nirenberg modified})$ then implies $r_n \rightarrow 0$ strongly in $L^{\frac{4}{d}+2}$. By the semi-continuity of weak convergence,
		\begin{align*}
		0 \leq \frac{1}{2} \liminf_{n\rightarrow \infty} \|r_n\|^2_{\dot{H}^1_c} &= \frac{1}{2} \liminf_{n\rightarrow \infty} \|r_n\|^2_{\dot{H}^1_c}  - \frac{d}{2d+4} \liminf_{n\rightarrow \infty} \|r_n\|_{L^{\frac{4}{d}+2}}^{\frac{4}{d}+2} \\
		&\leq \liminf_{n\rightarrow \infty} \left(\frac{1}{2}\|r_n\|^2_{\dot{H}^1_c} - \frac{d}{2d+4} \|r_n\|_{L^{\frac{4}{d}+2}}^{\frac{4}{d}+2} \right) \\
		&= \liminf_{n\rightarrow \infty} E(r_n).
		\end{align*}
		In particular,
		\begin{align*}
		\liminf_{n\rightarrow \infty} E(V(\cdot-x_n)) &\leq \liminf_{n\rightarrow \infty} E(V(\cdot-x_n)) + \liminf_{n\rightarrow \infty} E(r_n) \\
		&\leq \liminf_{n\rightarrow \infty} (E(V(\cdot-x_n)) + E(r_n)) = \liminf_{n\rightarrow \infty} E(v_n) =0.
		\end{align*}
		We also have from the sharp Gagliardo-Nirenberg inequality $(\ref{sharp gagliardo-nirenberg modified})$ and the fact $\|V(\cdot-x_n)\|_{L^2}= \|V\|_{L^2} = M_{\text{gs}}$ that $E(V(\cdot-x_n)) \geq 0$ for all $n\geq 1$. Therefore, we must have
		\[
		\liminf_{n\rightarrow \infty} E(V(\cdot-x_n)) =0.
		\]
		Taking $\liminf$ both sides of $(\ref{energy expansion})$, we obtain 
		$\liminf_{n\rightarrow \infty} E(r_n)=0$. Since $r_n \rightarrow 0$ strongly in $L^{\frac{4}{d}+2}$, we see that up to a subsequence, $\lim_{n\rightarrow \infty} \|r_n\|_{\dot{H}^1_c}=0$. Using the equivalence $\|\cdot\|_{\dot{H}^1_c} \sim \|\cdot\|_{\dot{H}^1}$, we obtain $\lim_{n\rightarrow \infty} \|\nabla r_n\|_{L^2}=0$.
		Thanks to the expansion
		\[
		\|\nabla v_n\|^2_{L^2} = \|\nabla V\|^2_{L^2} + \|\nabla r_n\|^2_{L^2} +o_n(1),
		\]
		as $n\rightarrow \infty$, we obtain
		\[
		\lim_{n\rightarrow \infty} \|\nabla v_n\|_{L^2} = \|\nabla V\|_{L^2}.
		\]
		Since $v_n(\cdot+x_n) \rightharpoonup V$ weakly in $H^1$, we infer that $v_n(\cdot+x_n) \rightarrow V$ strongly in $\dot{H}^1$ as $n\rightarrow \infty$. This proves the claim and the proof of the first item is complete.
		
		We now consider the case $c<0$. Let $(t_n)_{n\geq 1}$ be a sequence such that $t_n \uparrow T$. Denote
		\[
		\rho_n:= \frac{\|Q_{c,\text{rad}}\|_{\dot{H}^1_c}}{\|u(t_n)\|_{\dot{H}^1_c}}, \quad v_n(x):= \rho_n^{\frac{d}{2}} u(t_n, \rho_n x),
		\]
		where $Q_{c,\text{rad}}$ is given in Theorem $\ref{theorem sharp gagliardo-nirenberg}$. Since $u_0 \in H^1_{\text{rad}}$, we see that $u(t) \in H^1_{\text{rad}}$ for any $t$ as long as the solution exists. By the blow-up alternative, it follows that $\rho_n \rightarrow 0$ as $n\rightarrow \infty$. We also have
		\begin{align}
		\|v_n\|_{L^2} = \|u(t_n)\|_{L^2} = \|u_0\|_{L^2} = M_{\text{gs,rad}}, \quad \|v_n\|_{\dot{H}^1_c} = \rho_n\|u(t_n)\|_{\dot{H}^1_c} = \|Q_{c,\text{rad}}\|_{\dot{H}^1_c}, \label{property v_n radial}
		\end{align}
		and
		\[
		E(v_n) = \rho_n^2 E(u(t_n)) = \rho^2_n E(u_0) \rightarrow 0,
		\]
		as $n\rightarrow \infty$. In particular,
		\begin{align}
		\|v_n\|^{\frac{4}{d}+2}_{L^{\frac{4}{d}+2}} \rightarrow \frac{d+2}{d} \|Q_{c,\text{rad}}\|^2_{\dot{H}^1_c}, \label{limiting profile radial proof 1}
		\end{align}
		as $n\rightarrow \infty$. We thus obtain a bounded sequence $(v_n)_{n\geq 1}$ of $H^1_{\text{rad}}$-functions which satisfies conditions of Lemma $\ref{lemma compactness lemma}$ with 
		\[
		M^2= \|Q_{c,\text{rad}}\|^2_{\dot{H}^1_c}, \quad m^{\frac{4}{d}+2} =\frac{d+2}{d} \|Q_{c,\text{rad}}\|^2_{\dot{H}^1_c}.
		\]
		We learn from Lemma $\ref{lemma compactness lemma}$ that there exists $V \in H^1_{\text{rad}}$ such that up to a subsequence
		\[
		v_n \rightharpoonup V \text{ weakly in } H^1,
		\]
		as $n \rightarrow \infty$ and $\|V\|_{L^2} \geq \|Q_{c,\text{rad}}\|_{L^2} = M_{\text{gs,rad}}$. The semi-continuity of weak convergence and $(\ref{property v_n radial})$ imply that 
		\begin{align}
		M_{\text{gs,rad}} \leq \|V\|_{L^2} \leq \liminf_{n\rightarrow \infty} \|v_n\|_{L^2} = M_{\text{gs,rad}}. \label{limiting profile radial proof 2}
		\end{align}
		We thus get
		\[
		\|V\|_{L^2} = \lim_{n\rightarrow \infty} \|v_n\|_{L^2} = M_{\text{gs,rad}}.
		\]
		In particular, $v_n \rightarrow V$ strongly in $L^2$ as $n\rightarrow \infty$. This together with the sharp Gagliardo-Nirenberg inequality $(\ref{sharp radial gagliardo-nirenberg modified})$ yield that $v_n \rightarrow V$ strongly in $L^{\frac{4}{d}+2}$ as $n\rightarrow \infty$. By $(\ref{limiting profile radial proof 1})$ and $(\ref{limiting profile radial proof 2})$, the sharp Gagliardo-Nirenberg inequality implies that
		\[
		\|Q_{c,\text{rad}}\|^2_{\dot{H}^1_c} =\frac{d}{d+2} \lim_{n\rightarrow \infty} \|v_n\|^{\frac{4}{d}+2}_{L^{\frac{4}{d}+2}} = \frac{d}{d+2} \|V\|^{\frac{4}{d}+2}_{L^{\frac{4}{d}+2}} \leq \left(\frac{\|V\|_{L^2}}{M_{\text{gs,rad}}} \right)^{\frac{4}{d}} \|V\|^2_{\dot{H}^1_c} = \|V\|^2_{\dot{H}^1_c}.
		\]
		Using the above inequality, the semi-continuity of weak convergence and $(\ref{property v_n radial})$ imply
		\[
		\|Q_{c,\text{rad}}\|_{\dot{H}^1_c} \leq \|V\|_{\dot{H}^1_c} \leq \liminf_{n\rightarrow \infty} \|v_n\|_{\dot{H}^1_c} = \|Q_{c,\text{rad}}\|_{\dot{H}^1_c}.
		\]
		Hence
		\[
		\|V\|_{\dot{H}^1_c} = \lim_{n\rightarrow \infty} \|v_n\|_{\dot{H}^1_c} = \|Q_{c,\text{rad}}\|_{\dot{H}^1_c}.
		\]
		We now claim that 
		\[
		v_n \rightarrow V \text{ strongly in } H^1,
		\]
		as $n\rightarrow \infty$. To see this, we write
		\[
		v_n(x) = V(x) +r_n(x),
		\]
		with $r_n \rightharpoonup 0$ weakly in $H^1$ as $n\rightarrow \infty$. We easily verify that
		\[
		E(v_n) = E(V) + E(r_n) + o_n(1),
		\]
		as $n\rightarrow \infty$. Since $v_n \rightarrow V$ strongly in $L^2$, we see that $r_n \rightarrow 0$ strongly in $L^2$. The sharp Gagliardo-Nirenberg inequality $(\ref{sharp radial gagliardo-nirenberg modified})$ then implies that $r_n \rightarrow 0$ strongly in $L^{\frac{4}{d}+2}$. Arguing as in the case $0<c<\lambda(d)$, we get
		\[
		\liminf_{n\rightarrow \infty} E(r_n) \geq 0,
		\]
		and
		\[
		E(V) \leq E(V) + \liminf_{n\rightarrow \infty} E(r_n) \leq \liminf_{n\rightarrow \infty} (E(V) + E(r_n)) =\liminf_{n\rightarrow \infty} E(v_n) =0.
		\]
		On the other hand, since $\|V\|_{L^2} = M_{\text{gs,rad}}$, the sharp Gagliardo-Nirenberg inequality $(\ref{sharp radial gagliardo-nirenberg modified})$ implies that $E(V) \geq 0$. Therefore, $E(V)=0$. As a result, we obtain that
		\[
		\liminf_{n\rightarrow \infty} E(r_n) =0.
		\]
		Since $r_n \rightarrow 0$ strongly in $L^{\frac{4}{d}+2}$, we see that up to a subsequence, $\lim_{n\rightarrow \infty} \|r_n\|_{\dot{H}^1_c}=0$.  This implies in particular that $\lim_{n\rightarrow \infty} \|\nabla r_n\|_{L^2} =0$. Using the fact
		\[
		\|\nabla v_n\|_{L^2}^2 = \|\nabla V\|^2_{L^2} + \|\nabla r_n\|^2_{L^2} +o_n(1),
		\]
		as $n\rightarrow \infty$, we obtain $\lim_{n\rightarrow \infty} \|\nabla v_n\|_{L^2} = \|\nabla V\|_{L^2}$. Since $v_n \rightharpoonup V$ weakly in $H^1$ as $n\rightarrow \infty$, it follows that $v_n \rightarrow V$ strongly in $\dot{H}^1$ as $n\rightarrow \infty$. This proves the claim. 
		
		We thus obtain $V \in H^1_{\text{rad}}$ such that 
		\[
		\|V\|_{L^2} = M_{\text{gs,rad}}, \quad E(V)=0.
		\]
		The second equality follows from 
		\[
		\|V\|^{\frac{4}{d}+2}_{L^{\frac{4}{d}+2}} = \frac{d+2}{d} \|Q_{c,\text{rad}}\|^2_{\dot{H}^1_c}, \quad \|V\|_{\dot{H}^1_c} = \|Q_{c,\text{rad}}\|_{\dot{H}^1_c}.
		\]
		The variational structure of radial ground states given in Lemma $\ref{lemma variational structure ground states}$ implies that there exists $Q_{\text{rad}} \in \mathcal{G}_{\text{rad}}$ such that $V(x) = e^{i\vartheta} \rho^{\frac{d}{2}} Q(\rho x)$ for some $\vartheta \in \R$ and $\rho >0$. We thus obtain
		\[
		v_n(\cdot) = \rho^{\frac{d}{2}}_n u(t_n, \rho_n \cdot) \rightarrow V = e^{i\vartheta} \rho^{\frac{d}{2}} Q_{\text{rad}}(\rho \cdot) \text{ strongly in } H^1,
		\]
		as $n\rightarrow \infty$. Redefining $\tilde{\rho}_n:= \rho_n \rho^{-1}$, we obtain 
		\[
		e^{-i\vartheta} \tilde{\rho}_n^{\frac{d}{2}} u(t_n, \tilde{\rho}_n \cdot) \rightarrow Q_{\text{rad}} \text{ strongly in } H^1,
		\]
		as $n\rightarrow \infty$. The proof is complete.
	\end{proof}
	
	In order to prove the characterization of finite time blow-up solutions with minimal mass, we need to recall basic facts related to $(\ref{NLS inverse square})$. Let us start with the following Cauchy-Schwarz inequality due to Banica \cite{Banica}.
	\begin{lemma} \label{lemma cauchy-schwarz inequality}
		If one of the following conditions holds true
		\begin{itemize}
			\item $d\geq 3$, $0<c<\lambda(d)$ and $u\in H^1$ is such that $\|u\|_{L^2} = M_{\emph{gs}}$,
			\item $d\geq 3$, $c<0$ and $u \in H^1_{\emph{rad}}$ is such that $\|u\|_{L^2} = M_{\emph{gs,rad}}$,
		\end{itemize}
		then for any real valued function $\varphi \in C^1$ satisfying $\nabla \varphi$ is bounded, we have
		\begin{align}
		\left|\int \nabla \varphi \cdot \imem{(u \nabla \overline{u})} dx \right| \leq \sqrt{2E(u)} \left(\int |\nabla \varphi|^2 |u|^2 dx \right)^{1/2}. \label{cauchy-schwarz inequality}
		\end{align}
	\end{lemma}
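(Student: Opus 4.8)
The plan is to run Banica's phase-modulation argument (see \cite{Banica}): multiplying $u$ by a unimodular factor $e^{i\theta\varphi}$ changes neither $\|u\|_{L^2}$ nor $|u|$ pointwise, so the perturbed function still has critical mass; expanding its energy in the real parameter $\theta$ produces a quadratic polynomial which must be globally nonnegative, and the nonnegativity of a quadratic is precisely the desired Cauchy--Schwarz inequality.

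First, fix a real-valued $\varphi\in C^1$ with $\nabla\varphi$ bounded and, for $\theta\in\R$, set $w_\theta:=e^{i\theta\varphi}u$; in the case $c<0$ I additionally take $\varphi$ radial so that $w_\theta\in H^1_{\text{rad}}$ whenever $u\in H^1_{\text{rad}}$. Since $\nabla\varphi$ is bounded and $u\in H^1$, we have $w_\theta\in H^1$ with $\nabla w_\theta=e^{i\theta\varphi}(\nabla u+i\theta u\nabla\varphi)$, hence
\[
\int|\nabla w_\theta|^2\,dx=\int|\nabla u|^2\,dx-2\theta\int\nabla\varphi\cdot\imem{(u\nabla\overline{u})}\,dx+\theta^2\int|\nabla\varphi|^2|u|^2\,dx .
\]
As $|w_\theta|=|u|$ pointwise, the potential term $\int|x|^{-2}|w_\theta|^2\,dx$ and the nonlinear term $\int|w_\theta|^{\frac{4}{d}+2}\,dx$ do not depend on $\theta$, so
\[
E(w_\theta)=E(u)-\theta\int\nabla\varphi\cdot\imem{(u\nabla\overline{u})}\,dx+\frac{\theta^2}{2}\int|\nabla\varphi|^2|u|^2\,dx=:P(\theta),
\]
a quadratic polynomial in $\theta$ with nonnegative leading coefficient $\frac12\int|\nabla\varphi|^2|u|^2\,dx$.

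The core of the argument is the claim that $P(\theta)\ge 0$ for every $\theta\in\R$. Since $\|w_\theta\|_{L^2}=\|u\|_{L^2}$ equals $M_{\text{gs}}$ in the first case (resp. $M_{\text{gs,rad}}$, with $w_\theta$ radial, in the second), the sharp Gagliardo--Nirenberg inequality $(\ref{sharp gagliardo-nirenberg modified})$ (resp. $(\ref{sharp radial gagliardo-nirenberg modified})$), together with the explicit value of the sharp constant $C_{\text{GN}}(c)=\frac{d+2}{d}M_{\text{gs}}^{-\frac{4}{d}}$ (resp. $C_{\text{GN}}(c,\text{rad})=\frac{d+2}{d}M_{\text{gs,rad}}^{-\frac{4}{d}}$), yields
\[
E(w_\theta)\ge\frac12\|w_\theta\|_{\dot{H}^1_c}^2\Big(1-\frac{d}{d+2}C_{\text{GN}}(c)\|w_\theta\|_{L^2}^{\frac{4}{d}}\Big)=0 .
\]
Finally, a nonnegative quadratic $a\theta^2+b\theta+c$ with $a\ge 0$ has nonpositive discriminant, $b^2\le 4ac$; applied to $P$ this reads $\big(\int\nabla\varphi\cdot\imem{(u\nabla\overline{u})}\,dx\big)^2\le 2E(u)\int|\nabla\varphi|^2|u|^2\,dx$, and taking square roots gives $(\ref{cauchy-schwarz inequality})$. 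The degenerate case $\int|\nabla\varphi|^2|u|^2\,dx=0$ is trivial, since then both sides vanish.

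I expect the only substantive step to be the nonnegativity $P(\theta)\ge 0$; everything else is a one-line expansion. Its subtle feature is that it relies on the \emph{exact} critical value of the mass, through the ground-state identity $C_{\text{GN}}(c)\,M_{\text{gs}}^{\frac{4}{d}}=\frac{d+2}{d}$ (a consequence of the Pohozaev identities), so the hypothesis $\|u\|_{L^2}=M_{\text{gs}}$ (resp. $M_{\text{gs,rad}}$) cannot be relaxed; and, in the case $c<0$, it is exactly what forces us to stay in the radial class, where the relevant sharp constant is $C_{\text{GN}}(c,\text{rad})$ rather than $C_{\text{GN}}(c)=C_{\text{GN}}(0)$ --- the latter corresponds to the strictly smaller mass $\|Q_0\|_{L^2}<M_{\text{gs,rad}}$, for which $E(w_\theta)\ge 0$ would fail.
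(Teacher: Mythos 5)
Your proof is correct and is precisely Banica's phase--modulation argument, which is what the paper invokes: the lemma is stated here without proof, with a citation to \cite{Banica}, and your reconstruction (expand $E(e^{i\theta\varphi}u)$ as a quadratic in $\theta$, use the exact mass $\|u\|_{L^2}=M_{\text{gs}}$ together with $C_{\text{GN}}(c)=\frac{d+2}{d}M_{\text{gs}}^{-4/d}$ to get $E(e^{i\theta\varphi}u)\geq 0$, then read off the discriminant) is the standard route. The computation of $|\nabla w_\theta|^2$, the invariance of the potential and nonlinear terms under the unimodular factor, and the handling of the degenerate case $\int|\nabla\varphi|^2|u|^2\,dx=0$ are all fine.

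One small discrepancy with the statement as written: in the case $c<0$ you restrict to \emph{radial} $\varphi$ so that $w_\theta$ stays in $H^1_{\text{rad}}$ and the radial Gagliardo--Nirenberg inequality $(\ref{sharp radial gagliardo-nirenberg modified})$ applies; the lemma as stated allows arbitrary $C^1$ functions $\varphi$ with bounded gradient, and for non-radial $\varphi$ your argument genuinely breaks (as you yourself observe, the non-radial constant corresponds to the strictly smaller mass $\|Q_0\|_{L^2}<M_{\text{gs,rad}}$, so $E(w_\theta)\geq 0$ is not available). This is harmless for the paper, since the only $\varphi$ used in the proof of Theorem $\ref{theorem characterization minimal mass}$ is the radial cutoff $\varphi_R$, but it means the second item of the lemma should properly carry the additional hypothesis that $\varphi$ be radial.
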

	Note that by sharp Gagliardo Nirenberg inequalities $(\ref{sharp gagliardo-nirenberg modified})$ and $(\ref{sharp radial gagliardo-nirenberg modified})$, the above assumptions imply $E(u)$ is non-negative. 
	
	We also need the following virial identity (see e.g. \cite[Lemma 5.3]{Dinh-inverse} or \cite[Lemma 3, p.124]{CsoboGenoud}).
	\begin{lemma}[Virial identity] \label{lemma virial identity}
		Let $d\geq 3$ and $c \ne 0$ be such that $c<\lambda(d)$. Let $u_0 \in H^1$ be such that $|x| u_0 \in L^2$ and $u: I \times \R^d \rightarrow \C$ the corresponding solution to $(\ref{NLS inverse square})$. Then $|x| u \in C(I, L^2)$ and for any $t\in I$,
		\begin{align}
		\frac{d^2}{dt^2} \|x u(t)\|^2_{L^2} = 16 E(u_0). \label{virial identity}
		\end{align}
		In particular, we have for any $t\in I$,
		\begin{align}
		\begin{aligned}
		\int |x|^2 |u(t)|^2 dx &= \int |x|^2 |u_0|^2 dx - 4t \int x \cdot \imem{(u_0 \nabla \overline{u}_0)} dx + 8t^2 E(u_0)\\
		&= 8t^2 E\left(e^{i\frac{|x|^2}{4t}} u_0\right).
		\end{aligned}
		\label{virial identity application}
		\end{align}
	\end{lemma}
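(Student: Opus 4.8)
The proof is the classical virial computation (cf.\ \cite[Section~6.5]{Cazenave}), the only structural input being that the inverse-square potential is scale-invariant; the technical points — persistence of the weight and legitimacy of differentiating under the integral — are handled exactly as in \cite[Lemma~5.3]{Dinh-inverse} and \cite[Lemma~3, p.124]{CsoboGenoud}. First I would reduce to the case $u_0 \in H^2$ with $|x| u_0 \in L^2$, where the solution lies in $C(I', H^2)$ on short intervals and all manipulations below are classical, the general case then following by density in $\Sigma := \{ f \in H^1 : |x| f \in L^2 \}$ together with continuous dependence and the uniform bounds produced below. To obtain $|x| u \in C(I, L^2)$ one uses regularized weights $\varphi_R \geq 0$, smooth, with $\varphi_R(x) = |x|^2$ on $\{ |x| \leq R \}$, bounded for each $R$, and $|\nabla \varphi_R|^2 \leq C \varphi_R$ with $C$ independent of $R$; since $\partial_t |u|^2 = -2 \nabla \cdot \im{(\overline{u} \nabla u)}$ (the potential and nonlinear terms being $\im$ of a real quantity),
\[
\frac{d}{dt} \int \varphi_R |u(t,x)|^2 \, dx = 2 \, \im{\int \overline{u} \, \nabla \varphi_R \cdot \nabla u \, dx},
\]
so $\big| \tfrac{d}{dt} ( \int \varphi_R |u|^2 \, dx )^{1/2} \big| \lesssim \| \nabla u(t) \|_{L^2}$ uniformly in $R$; as $\| \nabla u \|_{L^2}$ is bounded on compact subintervals of $I$, integrating and letting $R \to \infty$ gives $\| x u(t) \|_{L^2} < \infty$ on $I$, with continuity in $t$ following from weak plus norm continuity.

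Next, set $g(t) := \| x u(t) \|_{L^2}^2$. One differentiation gives $g'(t) = 4 \, \im{\int \overline{u} \, (x \cdot \nabla u) \, dx}$. Differentiating again, using $\partial_t u = i ( \Delta u + c|x|^{-2} u + |u|^{\frac{4}{d}} u )$, the kinetic part yields $8 \| \nabla u \|_{L^2}^2$ and the nonlinear part $- \tfrac{8d}{d+2} \| u \|_{L^{\frac{4}{d}+2}}^{\frac{4}{d}+2}$ as in the potential-free case, while the potential contributes
\[
- 4 \, \re{\int c|x|^{-2} \overline{u} (x \cdot \nabla u) \, dx} + 4 \, \re{\int c|x|^{-2} \overline{u} (x \cdot \nabla u) \, dx} + 4 \int \big( x \cdot \nabla (c|x|^{-2}) \big) |u|^2 \, dx ,
\]
where the two singular terms cancel directly (each being finite by Hardy's inequality) and no integration by parts near the origin is needed. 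The decisive observation is that $c|x|^{-2}$ is homogeneous of degree $-2$, so $x \cdot \nabla(c|x|^{-2}) = -2 c |x|^{-2}$ and the surviving term is $-8 c \int |x|^{-2} |u|^2 \, dx$; hence
\[
g''(t) = 8 \| \nabla u(t) \|_{L^2}^2 - 8c \int |x|^{-2} |u(t,x)|^2 \, dx - \frac{8d}{d+2} \| u(t) \|_{L^{\frac{4}{d}+2}}^{\frac{4}{d}+2} = 16 \, E(u(t)) = 16 \, E(u_0),
\]
the last step by conservation of energy (Theorem~\ref{theorem local well-posedness}), which is $(\ref{virial identity})$.

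Since $g'' \equiv 16 E(u_0)$ is constant, $g$ is a quadratic polynomial in $t$; integrating twice from $0$, with $g(0) = \int |x|^2 |u_0|^2 \, dx$ and $g'(0) = 4 \, \im{\int \overline{u}_0 (x \cdot \nabla u_0) \, dx} = -4 \int x \cdot \im{(u_0 \nabla \overline{u}_0)} \, dx$, yields the first equality of $(\ref{virial identity application})$. For the second equality, a direct computation from $\nabla\big( e^{i|x|^2/(4t)} u_0 \big) = e^{i|x|^2/(4t)} \big( \nabla u_0 + \tfrac{i}{2t} x \, u_0 \big)$ and $| e^{i|x|^2/(4t)} u_0 | = |u_0|$ gives
\[
8 t^2 E\big( e^{i \frac{|x|^2}{4t}} u_0 \big) = 8 t^2 E(u_0) + \int |x|^2 |u_0|^2 \, dx - 4 t \int x \cdot \im{(u_0 \nabla \overline{u}_0)} \, dx ,
\]
which is exactly the right-hand side of the first equality. (Equivalently, this is the pseudo-conformal conservation law, still valid here precisely because the potential is scale-critical.)

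The main obstacle is not the algebra but the rigorous justification: establishing $|x| u \in C(I, L^2)$ via the regularized weights, and justifying the two differentiations under the integral sign with only $H^1$ (respectively $\Sigma$) regularity — which, together with the meaning of $c|x|^{-2} u$ in the appropriate functional space, is handled as in \cite[Lemma~5.3]{Dinh-inverse} and \cite[Lemma~3, p.124]{CsoboGenoud} (approximation by $H^2$ data and, where integrations by parts do meet the singularity, working on annuli $\{ \epsilon < |x| < R \}$ and letting $\epsilon \to 0$, $R \to \infty$, as in the Pohozaev identities recalled above).
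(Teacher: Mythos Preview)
Your proposal is correct and follows essentially the same approach as the paper: the paper simply defers the proof of $(\ref{virial identity})$ to \cite[Lemma~5.3]{Dinh-inverse} and \cite[Lemma~3, p.124]{CsoboGenoud}, then integrates twice and expands $\big|\nabla\big(e^{i|x|^2/(4t)}u_0\big)\big|^2$ for $(\ref{virial identity application})$, which is exactly what you do with the algebra spelled out.
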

	\begin{proof}
		We refer the reader to \cite[Lemma 5.3]{Dinh-inverse} or \cite[Lemma 3, p.124]{CsoboGenoud} for the proof of $(\ref{virial identity})$. The first identity in $(\ref{virial identity application})$ follows by integrating $(\ref{virial identity})$ over the time $t$. The second identity in $(\ref{virial identity application})$ follows from a direct computation using the fact that
		\[
		\left| \nabla \left(e^{i\frac{|x|^2}{4t}} u_0 \right) \right| = \frac{1}{4t^2} |x|^2 |u_0|^2 - \frac{1}{t} x\cdot \im{(u_0 \nabla \overline{u}_0)} +|\nabla u_0|^2.
		\]
		The proof is complete.
	\end{proof}
	
	We are now able to prove the characterization of finite time blow-up solutions with minimal mass given in Theorem $\ref{theorem characterization minimal mass}$.
	
	\noindent \textit{Proof of Theorem $\ref{theorem characterization minimal mass}$.}  
	Let us firstly consider the case $0<c<\lambda(d)$. Let $(t_n)_{n\geq 1}$ be such that $t_n\uparrow T$. By Theorem $\ref{theorem limiting profile minimal mass}$, we see that up to a subsequence, there exists $Q \in \mathcal{G}$ such that
	\begin{align}
	e^{i\theta_n} \lambda_n^{\frac{d}{2}} u(t_n, \lambda_n \cdot+ x_n) \rightarrow Q \text{ strongly in } H^1, \label{strong convergence}
	\end{align}
	as $n \rightarrow \infty$, where $(\theta_n)_{n\geq 1} \subset \R, (x_n)_{n\geq 1} \subset \R^d$ and $\lambda_n \rightarrow 0$ as $n\rightarrow \infty$. From this, we infer that
	\begin{align}
	|u(t_n,x)|^2 dx - \|Q\|_{L^2}^2 \delta_{x=x_n} \rightharpoonup 0, \label{weak convergence measure}
	\end{align}
	as $n\rightarrow \infty$. 
	
	Up to subsequence, we may assume that $x_n \rightarrow x_0 \in \{0,\infty\}$. Now let $\varphi$ be a smooth non-negative radial compactly supported function satisfying
	\[
	\varphi(x)=|x|^2 \text{ if } |x|<1, \text{ and } |\nabla \varphi(x)|^2 \leq C\varphi(x),
	\]
	for some constant $C>0$. For $R>1$, we define 
	\[
	\varphi_R(x):= R^2 \varphi(x/R), \quad U_R(t):= \int \varphi_R(x) |u(t,x)|^2 dx.
	\]
	Using the Cauchy-Schwarz inequality $(\ref{cauchy-schwarz inequality})$ and the fact $|\nabla \varphi_R|^2 \leq C |\varphi_R|$, we have
	\begin{align*}
	|U'_R(t)| &= 2\left| \int \nabla \varphi_R \cdot \im{ (u(t) \nabla \overline{u}(t))} dx \right| \\
	&\leq 2 \sqrt{2E(u_0)} \left(\int |u(t)|^2 |\nabla \varphi_R|^2 dx \right)^{1/2} \\
	&\leq C(u_0) \sqrt{U_R(t)}. 
	\end{align*}
	Integrating with respect to $t$, we obtain
	\begin{align}
	\left|\sqrt{U_R(t)} -\sqrt{U_R(t_n)} \right| \leq C(u_0) |t_n-t|. \label{estimate big U_R}
	\end{align}
	Thanks to $(\ref{weak convergence measure})$, we see that $U_R(t_n) \rightarrow 0$ as $n\rightarrow \infty$. Indeed, if $|x_n| \rightarrow 0$, then $U_R(t_n) \rightarrow \|Q\|^2_{L^2} \varphi_R(0) =0$ as $n\rightarrow \infty$. If $|x_n| \rightarrow \infty$, then $U_R(t_n) \rightarrow 0$ since $\varphi_R$ is compactly supported. Letting $n\rightarrow \infty$ in $(\ref{estimate big U_R})$, we obtain
	\[
	U_R(t) \leq C(u_0) (T-t)^2.
	\]
	Now fix $t \in [0,T)$, letting $R\rightarrow \infty$, we have
	\begin{align}
	8t^2 E\left(e^{i\frac{|x|^2}{4t}} u_0 \right) = \int |x|^2 |u(t,x)|^2 dx \leq C(u_0) (T-t)^2, \label{limit virial action}
	\end{align}
	where the first equality follows from Lemma $\ref{lemma virial identity}$. Note that we have from $(\ref{limit virial action})$ that $u(t) \in L^2(|x|^2dx)$ for any $t\in [0,T)$. We also have from $(\ref{weak convergence measure})$ and $(\ref{limit virial action})$ that
	\[
	\liminf_{n\rightarrow \infty} |x_n|^2 \|Q\|^2_{L^2} \leq C(u_0) T^2.
	\]
	Thus $x_n$ cannot go to infinity, hence $x_n$ converges to zero. Letting $t$ tends to $T$, we learn from $(\ref{limit virial action})$ that
	\[
	E\left(e^{i\frac{|x|^2}{4T}} u_0 \right) =0.
	\]
	We also have 
	\[
	\Big\| e^{i\frac{|x|^2}{4T}} u_0 \Big\|_{L^2} = \|u_0\|_{L^2} = M_{\text{gs}}.
	\]
	Therefore, Lemma $\ref{lemma variational structure ground states}$ shows that there exists $\tilde{Q} \in \mathcal{G}$ such that
	\[
	e^{i\frac{|x|^2}{4T}} u_0(x) = e^{i\tilde{\theta}} \tilde{\lambda}^{\frac{d}{2}} \tilde{Q}(\tilde{\lambda} x).
	\]
	Redefining $\tilde{\lambda} = \frac{\lambda}{T}$ and $\tilde{\theta} = \theta + \frac{\lambda^2}{T}$, we obtain
	\[
	u_0(x) = e^{i\theta} e^{i\frac{\lambda^2}{T}} e^{-i\frac{|x|^2}{4T}} \left( \frac{\lambda}{T}\right)^{\frac{d}{2}} \tilde{Q}\left( \frac{\lambda x}{T}\right).
	\]
	This shows $(\ref{characterization initial data})$. By the uniqueness of solution to $(\ref{NLS inverse square})$, we find that $u(t) = S_{\tilde{Q}, T, \theta, \lambda}(t)$ for any $t\in [0,T)$. This completes the proof of the case $0<c<\lambda(d)$.
	
	Let us now consider the case $c<0$. Let $(t_n)_{n\geq 1}$ be such that $t_n\uparrow T$. We have from Theorem $\ref{theorem limiting profile minimal mass}$ that up to a subsequence, there exists $Q_{\text{rad}}\in \mathcal{G}_{\text{rad}}$ such that
	\[
	e^{i\vartheta_n} \rho_n^{\frac{d}{2}} u(t_n, \rho_n \cdot) \rightarrow Q_{\text{rad}} \text{ strongly in } H^1,
	\]
	as $n\rightarrow \infty$, where $(\vartheta_n)_{n\geq 1} \subset \R$ and $\rho_n\rightarrow 0$ as $n\rightarrow \infty$. This implies that
	\begin{align}
	|u(t_n,x)|^2dx - \|Q_{\text{rad}}\|^2_{L^2} \delta_{x=0} \rightharpoonup 0, \label{measure convergence radial}
	\end{align}
	as $n\rightarrow \infty$. By the same argument as in the case $0<c<\lambda(d)$, we learn that
	\[
	\left| \sqrt{U_R(t)} - \sqrt{U_R(t_n)}\right| \leq C(u_0) |t_n-t|.
	\]
	Here $U_R(t_n) \rightarrow 0$ as $n\rightarrow \infty$. Indeed, by $(\ref{measure convergence radial})$, $U_R(t_n) \rightarrow \|Q_{\text{rad}}\|^2_{L^2} \varphi_R(0)=0$ as $n\rightarrow \infty$. Therefore, letting $n\rightarrow \infty$, we obtain
	\[
	U_R(t) \leq C(u_0) (T-t)^2.
	\]
	Fix $t\in [0,T)$, letting $R \rightarrow \infty$, we obtain
	\[
	8t^2 E\left( e^{i\frac{|x|^2}{4t}} u_0\right) = \int |x|^2 |u(t,x)|^2 dx \leq C(u_0) (T-t)^2.
	\]
	Letting $t\uparrow T$, we get 
	\[
	E\left( e^{i\frac{|x|^2}{4T}} u_0\right) =0,
	\]
	and also
	\[
	\left\| e^{i\frac{|x|^2}{4T}} u_0\right\|_{L^2} = \|u_0\|_{L^2} = M_{\text{gs,rad}}.
	\]
	By Lemma $\ref{lemma variational structure ground states}$, there exists $\tilde{Q}_{\text{rad}} \in \mathcal{G}_{\text{rad}}$ such that 
	\[
	e^{i\frac{|x|^2}{4T}} u_0(x) = e^{i\tilde{\vartheta}} \tilde{\rho}^{\frac{d}{2}} \tilde{Q}_{\text{rad}}(\tilde{\rho} x).
	\]
	Redefining $\tilde{\rho} = \frac{\rho}{T}$ and $\tilde{\vartheta} = \vartheta + \frac{\rho^2}{T}$, we get
	\[
	u_0 = e^{i\vartheta} e^{i\frac{\rho^2}{T}} e^{-i\frac{|x|^2}{4T}} \left( \frac{\rho}{T}\right)^{\frac{d}{2}} \tilde{Q}_{\text{rad}}\left(\frac{\rho x}{T} \right).
	\]
	This shows $(\ref{characterization initial data radial})$. The proof is complete.
	\defendproof
	
	\section*{Acknowledgments}
	A. Bensouilah would like to thank his thesis advisor, Pr. Sahbi Keraani, for many helpful discussions about the NLS with inverse-square potential, as well as his constant encouragement. V. D. Dinh would like to express his deep gratitude to his wife-Uyen Cong for her encouragement and support. He also would like to thank Prof. Changxing Miao for a discussion about Pohozaev's identities. The authors would like to thank the reviewers for their helpful comments and suggestions.

\end{document}